\newtheorem{theorem}{Theorem}[section]
\newtheorem{lemma}{Lemma}[section]
\newtheorem{proposition}{Proposition}[section]
\newtheorem{remark}{Remark}[section]
\newtheorem{definition}{Definition}[section]
\numberwithin{equation}{section}
\newcommand{\dotcup}{\ensuremath{\mathaccent\cdot\cup}}
\newcommand\blfootnote[1]{
  \begingroup
  \renewcommand\thefootnote{}\footnote{#1}
  \addtocounter{footnote}{-1}%
  \endgroup
}
\definecolor{gray9}{rgb}{0.9,0.9,0.9}
\def\Z{\mathbb{Z}}
\def\R{\mathbb{R}}
\def\N{\mathbb{N}}
\def\P{\mathbb{P}}
\def\n{{\bf{n}}}
\def\0{{\bf{0}}}
\def\vx{{\vec{x}}}
\def\vz{{\vec{z}}}
\def\vu{{\vec{u}}}
\def\vv{{\vec{v}}}
\def\vt{{\vec{t}}}
\def\vs{{\vec{s}}}
\def\vk{{\vec{k}}}
\def\vm{{\vec{m}}}
\def\vn{{\vec{n}}}
\def\va{{\vec{a}}}
\def\vb{{\vec{b}}}
\def\vl{\vec{\ell}}
\def\ve{{\vec{e}}}
\def\vf{{\vec{f}}}
\def\thematrix{ expansive dyadic integral }
\def\dotcup{\dot\cup}
\begin{document}

\allowdisplaybreaks

\begin{center}
\Large\textbf{Isomorphism in Wavelets}
\end{center}
\bigskip
\begin{center}
      \large\textit{Xingde Dai and Wei Huang}
\textsf{}\end{center}
      \address{The University of North Carolina at Charlotte; Wells Fargo Bank}
      \email{xdai@uncc.edu}
\blfootnote{2010 Mathematics Subject  Classification. Primary 46N99, 47N99, 46E99; Secondary 42C40, 65T60.}

\allowdisplaybreaks
Two scaling functions $\varphi_A$ and $\varphi_B$ for Parseval frame wavelets are algebraically isomorphic, $\varphi_A \simeq \varphi_B$,
if they have matching solutions to their (reduced) isomorphic systems of equations.
Let $A$ and $B$ be $d\times d$ and $s\times s$  \thematrix matrices with $d, s\geq 1$  respectively
and let $\varphi_A$ be a scaling function associated with matrix $A$ and generated by a finite solution.
There always exists a scaling function $\varphi_B$ associated with matrix $B$ such that
\begin{equation*}
    \varphi_B \simeq \varphi_A.
\end{equation*}
An example shows that the assumption on the finiteness of the solutions can not be removed.

\keywords{Parseval frame wavelets \and Isomorphism \and High Dimension \and Scaling function}

\section{Introduction}\label{intro}\label{ss:introduction}
For a vector $\vec{\ell}\in\R^d$,
the \textit{translation operator} $T_{\vec{\ell}}$ is defined as
\begin{eqnarray*}
    (T_{\vec{\ell}} f)(\vec{t}) &\equiv& f(\vec{t}-\vec{\ell}), ~ \forall f\in L^2(\R^d), \ \forall \vt\in\R^d\ .
\end{eqnarray*}
Let $A$ be a $d \times d$ integral matrix with eighenvalues $\beta_1, \cdots, \beta_d$. $A$ is called \textit{expansive} if $\min \{|\beta_1|, \cdots, |\beta_d|\} >1$.  $A$ is called \textit{dyadic}   if $|\det(A)|=2$.
We define the operator $D_{A}$ as
\begin{eqnarray*}
    (D_A f)(\vec{t}) &\equiv& (\sqrt{2}) f(A\vec{t}),~\forall f \in L^2(\R^d), \ \forall \vt\in\R^d\ .
\end{eqnarray*}
The operators $T_{\vec{\ell}}$ and $D_A$ are unitary operators on $L^2(\R^d)$.

\begin{definition}\label{def:defpsi0}
Let $A$ be an expansive dyadic integral matrix.
A function $\psi_A\in L^2(\R^d)$ is called a Parseval frame wavelet associated with $A$, if the set
\begin{equation*}
    \{ D_A ^n T_{\vec{\ell}}~\psi_A \mid  n\in\Z,\vec{\ell}\in\Z^d\}
\end{equation*}
forms a normalized tight frame for $L^2(\R^d)$.
That is
\begin{equation*}
    \|f\|^2 = \sum_{n\in\Z , \vec{\ell} \in \Z^d} |\langle f,D_A ^n T_{\vec{\ell}}~\psi_A \rangle |^2, \ \forall f\in L^2(\R^d).
\end{equation*}
If the set is also orthogonal, then $\psi_A$ is an orthonormal wavelet for $L^2(\R^d)$ associated with $A$.
\end{definition}

Consider the following system of equations \eqref{eq:lawton} associated with an expansive dyadic integral matrix $A$:
\begin{equation}\label{eq:lawton}
\left\{\begin{array}{l}
\sum_{\vec{n}\in\Z^d}h_{\vec{n}}\overline{h_{\vec{n}+\vec{k}}}=\delta_{\vec{0} \vec{k}},~ \vec{k}\in A\Z^d,\\
\sum_{\vec{n}\in\Z^d}h_{\vec{n}}=\sqrt{2}.
\end{array}\right.
\end{equation}
Let $\mathcal{S}=\{s_{\vn}\mid \vec{n}\in\Z^d\}$ be a solution.
The set $\Lambda = \{\vn \in \Z^d \mid s_\vn \neq 0\}$ is called the support of $\mathcal{S}$.
We will say the solution is finite if its support is a finite set.
Define the operator $\Psi$ on $L^2(\R^d)$ as
\begin{equation*}
  \Psi \equiv \sum_{\vec{n}\in\Z^d} s_{\vec{n}} D_AT_{\vec{n}}.
\end{equation*}
Bownik \cite{B} and Lawton \cite{lawton} proved that
the iterated sequence
\begin{equation}\label{eq:sequence}
\{\Psi^k \chi_{_{[0,1)^d}} \mid k\in\N\}
\end{equation}
converges in the $L^2(\R^d)$-norm.
The limit is the scaling function $\varphi_A$ associated with matrix $A$.
It induces a Parseval frame wavelet $\psi_A$ associated with matrix $A$.
The scaling function $\varphi_A$ satisfies the following two-scale relation \eqref{eq:twoeq1}:
\begin{eqnarray}\label{eq:twoeq1}
   \varphi_{A} &=& \sum_{\vec{n}\in\Z^d} s_{\vec{n}} D_AT_{\vec{n}} \varphi_A.
\end{eqnarray}
We will say that $\varphi_A$ is \textit{derived} from the solution $\mathcal{S}$.
This scaling function associated with matrix $A$ is generated by a solution $\mathcal{S}=\{s_\vn\}$ to the system of equations \eqref{eq:lawton}.

By Mallat's idea \cite{mallat}, a (orthogonal) scaling function (in $L^2(\R)$ when $A$ is the $1 \times 1$ matrix $[2]$) yields a (orthogonal) Parseval frame wavelet $\psi_A$.
A classical work \cite{db} by Daubechies provided a deep insight on this and related topics including the closed formula for related wavelet function $\psi_A$.
Lawton \cite{lawton} shows how to get a compact supported Parseval frame wavelet from a solution to the system of equations \eqref{eq:lawton} when $d=1$.
Bownik \cite{B} proved that the sequence defined in \eqref{eq:sequence} converges in $L^2(\R^d)$ in general,
the limit $\varphi_A$ is a scaling function and it produces a Parseval frame wavelet $\psi_A$ associated with matrix $A$.

The scaling function $\varphi$ in this paper is not necessarily orthogonal. So the wavelets and scaling functions we discuss in this paper fit the definition of the
\textit{frame multi-resolution analysis} (FMRA) by   J. Benedetto and S. Li \cite{benedetto} and  it also fits the definition of the \textit{general multi-resolution analysis} (GMRA) by
L. Baggett, H. Medina and K.  Merrill \cite{baggett}.
Many authors worked on the construction of Parseval wavelets with coefficients satisfying the two-scale relation. This includes more general unitary extension principle (UEP) of Ron and Shen \cite{DHRS} \cite{RS}.

We say that two scaling functions $\varphi_A$ and $\varphi_B$
are \textit{algebraically isomorphic} (see Definition \ref{def:PHIISO}), $\varphi_A \simeq \varphi_B$,
if they have matching solutions to their (reduced) isomorphic systems of equations (see Definition \ref{def:EQNISO}).

\begin{definition}
Let $\mathcal{W}_0(A,d)$ be the collection of all scaling functions in $L^2 (\R^d)$ associated with a given $d \times d$ \thematrix matrix $A$
and generated by finite solutions to \eqref{eq:lawton}.
Let $\mathcal{W}_0(d)$ be the union of $\mathcal{W}_0(A,d)$ for all $d\times d$ \thematrix matrix $A$.
Let $\mathcal{W}_0$ be the union of $\mathcal{W}_0(d)$ for $d\geq 1$.
\end{definition}

Our main result in this paper is:
\begin{theorem}\label{thm:newmain}
Let $A$ and $B$ be $d\times d$ and $s\times s$  \thematrix matrices with $d, s\geq 1$  respectively and let
$\varphi_A$ be a scaling function in $\mathcal{W}_0(A,d)$. There always exists a scaling function $\varphi_B\in \mathcal{W}_0(B,s)$ such that
\begin{equation*}
    \varphi_B \simeq \varphi_A.
\end{equation*}
\end{theorem}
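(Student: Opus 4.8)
The plan is to strip the two systems down to their combinatorial cores and transplant the solution from $\Z^d$ into $\Z^s$ by a single linear map. Since $\varphi_A\in\mathcal{W}_0(A,d)$, it is derived from a finite solution $\mathcal{S}=\{s_\vn\}$ of \eqref{eq:lawton} with finite support $\Lambda\subset\Z^d$. The data of $\mathcal{S}$ that enters its reduced system of equations is just: the finite set $\Lambda$; the affine relation $\sum_{\vn\in\Lambda}s_\vn=\sqrt2$; and, for each $\vk$ in the finite set $(\Lambda-\Lambda)\cap A\Z^d$, the quadratic relation $\sum_{\vn,\vn+\vk\in\Lambda}s_\vn\overline{s_{\vn+\vk}}=\delta_{\0\vk}$. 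Because $|\det A|=2$, the quotient $\Z^d/A\Z^d$ has order $2$; writing $\pi_A\colon\Z^d\to\Z^d/A\Z^d$ for the quotient map, a difference $\vn-\vn'$ lies in $A\Z^d$ precisely when $\pi_A(\vn)=\pi_A(\vn')$. So the reduced $A$-system is encoded by $\Lambda$, the partition of $\Lambda\times\Lambda$ into difference classes, and the two-colouring $\pi_A|_\Lambda$; the same description, with $B$ in place of $A$, applies to any finite solution of the $B$-system.

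First I would construct a group homomorphism $\rho\colon\Z^d\to\Z^s$, given by $\rho(\vn)=M\vn$ for an integer $s\times d$ matrix $M$, with two properties: \textbf{(i)} $\rho^{-1}(B\Z^s)=A\Z^d$, equivalently $\pi_B\circ\rho=\pi_A$ after identifying $\Z^d/A\Z^d$ and $\Z^s/B\Z^s$ with $\Z/2\Z$; and \textbf{(ii)} $\rho$ is injective on the finite set $\Lambda-\Lambda$ (in particular on $\Lambda$). Property (i) is available because $\Z^d$ is a free (hence projective) $\Z$-module, so the homomorphism $\pi_A\colon\Z^d\to\Z/2\Z$ lifts along the surjection $\pi_B\colon\Z^s\to\Z/2\Z$: for each standard basis vector $\ve_\ell$ choose $\rho(\ve_\ell)$ with $\pi_B(\rho(\ve_\ell))=\pi_A(\ve_\ell)$. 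One is then free to add an arbitrary element of $B\Z^s$ to each $\rho(\ve_\ell)$ without disturbing (i), i.e.\ to replace $M$ by $M+BM'$; choosing $M'$ so that $M$ annihilates none of the finitely many nonzero vectors of $(\Lambda-\Lambda)-(\Lambda-\Lambda)$ secures (ii). Each of those finitely many conditions excludes only an infinite-index coset of integer matrices $M'$, so a good choice exists. This is the only place the hypothesis enters: for $s<d$ no homomorphism $\Z^d\to\Z^s$ is injective on an infinite set, so finiteness of $\Lambda$ is essential here --- consistent with the promised example.

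Next I would transplant: put $\Lambda_B:=\rho(\Lambda)$ (finite, since $\rho$ is injective on $\Lambda$) and define $\mathcal{S}_B=\{t_\vm\}_{\vm\in\Z^s}$ by $t_{\rho(\vn)}:=s_\vn$ for $\vn\in\Lambda$ and $t_\vm:=0$ otherwise. I then verify $\mathcal{S}_B$ solves \eqref{eq:lawton} for $B$. The affine relation is immediate. For the quadratic relations, given $\vk\in B\Z^s$ a pair $(\vm,\vm+\vk)$ contributes only when $\vm=\rho(\vn)$, $\vm+\vk=\rho(\vn')$ with $\vn,\vn'\in\Lambda$, whence $\vk=\rho(\vn'-\vn)$; by (ii) the vector $\vk':=\vn'-\vn\in\Lambda-\Lambda$ is its unique $\rho$-preimage in $\Lambda-\Lambda$, and $\vk'\in A\Z^d$ by (i). Hence $\sum_\vm t_\vm\overline{t_{\vm+\vk}}=\sum_{\vn,\vn+\vk'\in\Lambda}s_\vn\overline{s_{\vn+\vk'}}=\delta_{\0\vk'}=\delta_{\0\vk}$, so $\mathcal{S}_B$ is a finite solution; by the convergence result of Bownik and Lawton quoted above it derives some $\varphi_B\in\mathcal{W}_0(B,s)$.

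Finally I would check $\varphi_B\simeq\varphi_A$. The restriction $\rho|_\Lambda\colon\Lambda\to\Lambda_B$ is a bijection; being the restriction of a homomorphism that is injective on $\Lambda-\Lambda$, it carries the difference-class partition of $\Lambda$ onto that of $\Lambda_B$, and by (i) it matches the condition $\vk\in A\Z^d$ with $\rho(\vk)\in B\Z^s$. Thus $\rho|_\Lambda$ induces an isomorphism from the reduced system of $\mathcal{S}$ onto that of $\mathcal{S}_B$ in the sense of Definition \ref{def:EQNISO}, and since $t_{\rho(\vn)}=s_\vn$ the two solutions correspond under it; by Definition \ref{def:PHIISO} this is exactly $\varphi_B\simeq\varphi_A$. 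I expect the main obstacle to be the simultaneous realization of (i) and (ii) --- producing a linear map that respects both the index-$2$ cosets and the difference structure on the nose while landing in the prescribed dimension $s$; once $\rho$ is in hand the remaining steps are routine bookkeeping.
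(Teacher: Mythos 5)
Your proof is correct, but it takes a genuinely different route from the paper's. The paper factors the problem through dimension one: it first normalizes $A\Z^d$ via the Smith normal form so that membership in $A\Z^d$ is read off the parity of the last coordinate (Proposition \ref{prop:newbase}), then builds an explicit digit-encoding injection $f_{d,N}$ on the box $[0,2^N)^d\cap\Z^d$ that is additive exactly where needed (Lemma \ref{lm:add}) and parity-preserving, giving $\mathcal{E}_{(\Lambda,A,d)}\sim\mathcal{E}_{(\Lambda_1,[2],1)}$ for any finite support after a shift; the passage to $B$ is the reverse construction (Theorems \ref{thm:main} and \ref{thm:main_dual}). You instead go directly from $d$ to $s$ with a single global homomorphism $\rho(\vn)=M\vn$ that intertwines the two index-two quotients ($\pi_B\circ\rho=\pi_A$, available because $\Z^d$ is free and both quotients are $\Z/2\Z$ since $|\det A|=|\det B|=2$) and is injective on the finite symmetric set $\Lambda-\Lambda$, secured by perturbing $M$ to $M+BM'$ so as to avoid finitely many proper affine sublattices of the space of integer matrices. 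Because $\rho$ is additive everywhere, the hypothesis of Proposition \ref{prop:iso} holds automatically with $\Theta=\rho$, and your index-set bookkeeping goes through since $\Lambda^E\subseteq\Lambda-\Lambda$, that set is symmetric, and $\Lambda_B-\Lambda_B=\rho(\Lambda-\Lambda)$. What the paper's route buys is complete explicitness and a canonical one-dimensional model $\Lambda_{1,N}$ shared by all supports inside $\Lambda_{d,N}$; what yours buys is brevity, symmetry between $A$ and $B$, and a transparent view of where finiteness enters. One inaccuracy in an aside: it is not true that for $s<d$ no homomorphism $\Z^d\to\Z^s$ is injective on an infinite set (a coordinate projection is injective on a line); the correct statement, and the one matching the paper's counterexample in Section \ref{ss:notiso}, is that no such homomorphism is injective on a set containing a finite-index sublattice such as $L$. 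This slip does not affect your proof of the theorem.
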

Scaling functions in  $\mathcal{W}_0$  have compact supports since they are generated by finite solutions.
B. Han and R. Jia made an extended discussion on this in \cite{hanjia}.

The key technique in the proof of Theorem \ref{thm:newmain} is provided by Lemma \ref{lm:add} which shows that there exists a pair of ``local lattice isomorphisms" between special bounded subsets of lattice $\Z^d$ and its corresponding bounded subsets of $\Z$.
This algebraic isomorphism is an equivalence relation on $\mathcal{W}_0$ (see Lemma \ref{lm:phiiso}),
hence an equivalence relation on $\mathcal{W}_0(A,d)$ and on $\mathcal{W}_0(d)$ as well.
So there is a natural one-to-one mapping from $\frac{\mathcal{W}_0(A,d)}{\simeq}$ onto $\frac{\mathcal{W}_0(B,s)}{\simeq}$.
Also, there is a natural one-to-one mapping from
$\frac{\mathcal{W}_0(d)}{\simeq}$ onto $\frac{\mathcal{W}_0(s)}{\simeq}$.

Q. Gu and D. Han \cite{guhan} proved that, if an integral expansive matrix is associated with single function \textit{orthogonal wavelets} with multi-resolution analysis (MRA), then the matrix must be dyadic.  Our results could be generalized to multi-wavelets (see C. Cabrelli, C. Heil and U. Molter \cite{heil}).

This paper is organized as follows. In Section \ref{ss:lemmasdefinitions}, we collect general definitions, Lemmas and Propositions. In Section \ref{ss:base}, we present a special basis for $\Z^d$ that simplifies the format of $A\Z^d$. Section \ref{ss:core} is devoted to the proof of Theorem \ref{thm:newmain}.
In Section \ref{ss:notiso}, we present an example which shows that in Theorem \ref{thm:newmain} the condition on finiteness of the solutions can not be removed.

\section{Definitions and Lemmas}\label{ss:lemmasdefinitions}

\begin{definition}\label{def:red}
Let $A$ be a $d\times d$ expansive dyadic integral matrix and $\Lambda_A \in \Z^d$.
A \textit{reduced system of equations}  $\mathcal{E}_{(\Lambda_A,A,d)}$ is obtained from system of equations \eqref{eq:lawton} as following:
\begin{description}
  \item[Step 1] Replace all variables $h_\vn$ in \eqref{eq:lawton} by $0$, where $\vn\notin\Lambda_A$;
  \item[Step 2] Remove all trivial equations ``$0=0$".
\end{description}
\end{definition}
We denote the family of \textit{all} such reduced systems of equations by $\mathfrak{E}$.

For vector $\vk\in A\Z^d$,
if there exists at least one element vector $\vn\in\Lambda_A$ such that
the the vector $\vn+\vk$ is also in $\Lambda_A$,
then the equation $\sum_{\vn \in \Lambda_A} h_{\vn} \overline{h_{\vn+\vk}}=\delta_{\vec{0}\vk}$ is non-trivial and hence is in $\mathcal{E}_{(\Lambda_A,A,d)}$.
We say this equation is \textit{generated} by $\vk$.
A solution to the system has support \textit{contained} in $\Lambda_A$.

It is clear we have:
\begin{lemma}\label{lm:vklambda}
The following statements are equivalent:
\begin{description}
  \item[(A)] $\vk\in A\Z^d$ generates a non-trivial equation $\displaystyle\sum_{\vn \in \Lambda_A} h_{\vn} \overline{h_{\vn+\vk}}=\delta_{\vec{0}\vk}$ in $\mathcal{E}_{(\Lambda_A,A,d)}$.
  \item[(B)] $\Lambda_A \cap (\Lambda_A - \vk)\neq \emptyset$.
  \item[(C)] $\Lambda_A \cap (\Lambda_A + \vk)\neq \emptyset$.
\end{description}
\end{lemma}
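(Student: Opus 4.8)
The statement only unwinds Definition \ref{def:red}, so I would establish the two elementary equivalences (A)$\Leftrightarrow$(B) and (B)$\Leftrightarrow$(C), after recording the harmless standing assumption $\Lambda_A\neq\emptyset$ (when $\Lambda_A=\emptyset$ there are no solutions to discuss at all).

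For (A)$\Leftrightarrow$(B) I would inspect the single equation of \eqref{eq:lawton} indexed by $\vk$, namely $\sum_{\vn\in\Z^d}h_\vn\overline{h_{\vn+\vk}}=\delta_{\vec{0}\vk}$, and track Step~1 of Definition \ref{def:red}: the summand at $\vn$ survives exactly when both $h_\vn$ and $h_{\vn+\vk}$ are retained, i.e.\ when $\vn\in\Lambda_A$ and $\vn+\vk\in\Lambda_A$, equivalently when $\vn\in\Lambda_A\cap(\Lambda_A-\vk)$. Hence the reduced $\vk$-equation keeps at least one summand iff $\Lambda_A\cap(\Lambda_A-\vk)\neq\emptyset$. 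When this holds the equation is genuinely non-trivial (for $\vk=\vec{0}$ it reads $\sum_{\vn\in\Lambda_A}|h_\vn|^2=1$). When it fails we necessarily have $\vk\neq\vec{0}$ (for $\vk=\vec{0}$ the intersection is all of $\Lambda_A$), so the left-hand side collapses to $0$ while $\delta_{\vec{0}\vk}=0$ as well, and Step~2 deletes the equation. This is exactly (A)$\Leftrightarrow$(B).

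For (B)$\Leftrightarrow$(C) I would note that $\vec{p}\in\Lambda_A\cap(\Lambda_A-\vk)$ means $\vec{p}\in\Lambda_A$ and $\vec{p}+\vk\in\Lambda_A$, which in turn says exactly $\vec{p}+\vk\in\Lambda_A\cap(\Lambda_A+\vk)$; thus the shift $\vec{p}\mapsto\vec{p}+\vk$ is a bijection between the two intersections, so one is non-empty precisely when the other is.

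I do not expect a genuine obstacle, as the whole content is definitional. The only place a careless reading could slip is the interplay between the empty-support case and the right-hand side $\delta_{\vec{0}\vk}$, which is why I would put the nonemptiness of $\Lambda_A$ up front.
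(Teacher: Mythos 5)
Your proposal is correct and matches the paper's (essentially omitted) argument: the paper treats the lemma as an immediate unwinding of Definition \ref{def:red}, noting just before the statement that $\vk$ yields a non-trivial equation exactly when some $\vn\in\Lambda_A$ has $\vn+\vk\in\Lambda_A$, which is your (A)$\Leftrightarrow$(B), and (B)$\Leftrightarrow$(C) is the same shift bijection you describe. Your extra care about the $\vk=\vec{0}$ case and the nonemptiness of $\Lambda_A$ is sound and fills in exactly what the paper leaves implicit.
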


We say that two elements $\vk_1,\vk_2 \in A\Z^d$ are related, $\vk_1 \sim \vk_2$, if the equation
$\sum_{\vn \in \Lambda_A} h_{\vn} \overline{h_{\vn+\vk_1}}=\delta_{\vec{0},\vk_1}$
is the same as either
$\sum_{\vn \in \Lambda_A} h_{\vn} \overline{h_{\vn+\vk_2}}=\delta_{\vec{0},\vk_2},$
or its conjugate
$\sum_{\vn \in \Lambda_A} \overline{h_{\vn}} h_{\vn+\vk_2}=\delta_{\vec{0},\vk_2}$.
This is an equivalence relation on $A\Z^d$ and it leads to a partition of $A\Z^d$.
We denote the partition by $\P_{\Lambda_A} \equiv \{P_{\emptyset,\Lambda_A}, P_{0,\Lambda_A}, P_{1,\Lambda_A}, \cdots \}$.
\begin{equation*}
A\Z^d = \big(P_{\emptyset,\Lambda_A}\dotcup P_{0,\Lambda_A}\big) \dotcup \bigcup \! \! \! \! \! \!   \cdot \ \  \{P_{j,\Lambda_A}\mid j>0, P_{j,\Lambda_A} \in \P_{\Lambda_A}\}.
\end{equation*}
$P_{\emptyset,\Lambda_A}$ contains all elements that generate the trivial equation $``0=0"$.
It is an infinite set.
$P_{0,\Lambda_A}$ contains only one element $\vec{0}$, which generates the equation $\sum_{\vn \in \Lambda_A} |h_{\vn}|^2 =1$.
Each set $P_{j,\Lambda_A}, j>0$, collects all elements that generate the same equation in $\mathcal{E}_{(\Lambda_A,A,d)}$. In some special cases of $\Lambda_A$, the part $\dotcup \{P_{j,\Lambda_A}\mid j>0, P_{j,\Lambda_A} \in \P_{\Lambda_A}\}$ could be empty. However
the next Lemma \ref{lm:pmk} shows that each set $P_{j,\Lambda_A}, j>0$, if nonempty,  has exactly two elements.

\begin{lemma}\label{lm:pmk}
If $\vk_1,\vk_2\in A\Z^d$ generate non-trivial equations in $\mathcal{E}_{(\Lambda_A,A,d)}$, then $\vk_1 \sim \vk_2$ if and only if $\vk_1 = \pm \vk_2$.
\end{lemma}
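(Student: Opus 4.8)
The plan is to analyze exactly when two vectors $\vk_1, \vk_2 \in A\Z^d$ generate the *same* equation (or conjugate equation) in the reduced system $\mathcal{E}_{(\Lambda_A,A,d)}$. First I would record the trivial direction: if $\vk_1 = \vk_2$ the equations are literally identical, and if $\vk_1 = -\vk_2$ then since $\vk_2 \in A\Z^d \Rightarrow -\vk_2 \in A\Z^d$, the equation generated by $\vk_1 = -\vk_2$ is $\sum_{\vn}h_\vn \overline{h_{\vn - \vk_2}} = \delta_{\vec 0,\vk_2}$, which after the index substitution $\vn \mapsto \vn + \vk_2$ becomes $\sum_{\vn} h_{\vn+\vk_2}\overline{h_\vn} = \delta_{\vec 0, \vk_2}$, i.e. precisely the conjugate of the equation generated by $\vk_2$. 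Hence $\vk_1 \sim \vk_2$. This gives the "if" part.

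For the "only if" part, suppose $\vk_1 \sim \vk_2$ with both generating non-trivial equations, so by Lemma \ref{lm:vklambda} there exist $\vn \in \Lambda_A \cap (\Lambda_A - \vk_1)$ and the analogous nonemptiness for $\vk_2$. The key observation is that the equation generated by $\vk$ is, as a formal relation among the symbols $\{h_\vn : \vn \in \Lambda_A\}$, the sum $\sum h_\vn \overline{h_{\vn + \vk}}$ over those $\vn$ with both $\vn, \vn+\vk \in \Lambda_A$; the set of monomials $h_\vn \overline{h_{\vn'}}$ appearing (with $\vn \ne \vn'$, i.e. $\vk \ne \vec 0$) is $\{(\vn, \vn+\vk) : \vn, \vn+\vk \in \Lambda_A\}$, an ordered-pair set whose constant difference $\vn' - \vn$ is exactly $\vk$. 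If $\vk_1 \sim \vk_2$ via equality of equations, the monomial sets coincide, forcing $\vk_1 = \vk_2$; if via the conjugate, the monomial set of $\vk_2$'s conjugate is $\{(\vn+\vk_2, \vn) : \vn, \vn+\vk_2 \in \Lambda_A\}$, whose difference is $-\vk_2$, forcing $\vk_1 = -\vk_2$. I would phrase this cleanly by noting that a non-trivial equation determines $\vk$ up to sign: read off any monomial $h_{\vn}\overline{h_{\vn'}}$ actually occurring; then $\vk \in \{\vn' - \vn, \vn - \vn'\}$ depending on whether we matched the equation or its conjugate.

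The main subtlety to handle carefully is the possibility that the equation generated by $\vk$ could, by accident, coincide *as a written equation* with the one generated by some unrelated $\vk'$ even though the index sets differ — but this cannot happen because each monomial $h_\vn\overline{h_{\vn'}}$ with $\vn \ne \vn'$ pins down the pair $(\vn,\vn')$ and hence the difference; distinct differences give monomial-disjoint sums (the supports of the two equations in the polynomial ring $\C[h_\vn : \vn \in \Lambda_A]$ are disjoint), so two non-trivial generated equations are equal only if they have the same difference $\vk$, and equal to a conjugate only if the differences are negatives. One should also note the edge case $\vk_1 = \vec 0$: then the equation is $\sum |h_\vn|^2 = 1$, its own conjugate, with "difference" $\vec 0 = -\vec 0$, so the statement $\vk_1 = \pm\vk_2$ degenerates correctly to $\vk_2 = \vec 0$. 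I expect the bulk of the write-up to be this bookkeeping about which monomials appear; there is no real analytic or combinatorial obstacle, only the need to be precise that "same equation" means equality of formal sums of monomials in the variables $h_\vn$.
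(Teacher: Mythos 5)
Your proposal is correct and follows essentially the same route as the paper: the forward direction by matching a common monomial $h_{\vn}\overline{h_{\vn'}}$ and reading off the difference $\vn'-\vn$ (splitting into the ``equal'' and ``conjugate'' cases to get $\vk_1=\vk_2$ or $\vk_1=-\vk_2$), and the reverse direction by the index substitution $\vn\mapsto\vn+\vk$ followed by conjugation. Your explicit treatment of the $\vk=\vec{0}$ edge case and the remark that distinct differences give monomial-disjoint sums are slightly more careful than the paper's write-up but not a different argument.
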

\begin{proof}
Let $\vk_1,\vk_2\in A\Z^d$ generate non-trivial equations in $\mathcal{E}_{(\Lambda_A,A,d)}$.

($\Rightarrow$) If $\vk_1 \sim \vk_2$ , then we have two cases:

(A) $\sum_{\vn \in \Lambda_A} h_{\vn} \overline{h_{\vn+\vk_1}}=\delta_{\vec{0},\vk_1}$ and $\sum_{\vn \in \Lambda_A} h_{\vn} \overline{h_{\vn+\vk_2}}=\delta_{\vec{0},\vk_2}$ are the same.
        Then there exists one common term $h_{\vn_1} \overline{h_{\vn_1+\vk_1}}$ and $h_{\vn_2} \overline{h_{\vn_2+\vk_2}}$ for some $\vn_1,\vn_2\in \Lambda_A$.
        So $\vn_1=\vn_2$ and $\vn_1+\vk_1 = \vn_2+\vk_2$, thus $\vk_1 = \vk_2$.

(B) $\sum_{\vn \in \Lambda_A} h_{\vn} \overline{h_{\vn+\vk_1}}=\delta_{\vec{0},\vk_1}$ and $\sum_{\vn \in \Lambda_A} \overline{h_{\vn}} h_{\vn+\vk_2}=\delta_{\vec{0},\vk_2}$ are the same.
        Then there exists one common term $h_{\vn_1} \overline{h_{\vn_1+\vk_1}}$ and $\overline{h_{\vn_2}} h_{\vn_2+\vk_2}$ for some $\vn_1, \vn_2 \in \Lambda_A$.
        So $\vn_1=\vn_2+\vk_2$ and $\vn_1+\vk_1 = \vn_2$, thus $\vk_1 = -\vk_2$.

($\Leftarrow$) It is obvious when $\vk_1 = \vk_2$.
So we only need to show that, if $\vk\in A\Z^d$ generates a non-trivial equation in $\mathcal{E}_{(\Lambda_A,A,d)}$, then $\vk \sim -\vk$, i.e. $\vk$ and $-\vk$ generate the same equation.
The equation generated by $\vk$ is
\begin{equation}\label{pf:eq1}
\sum_{\vn \in \Lambda_A \cap (\Lambda_A-\vk)} h_{\vn} \overline{h_{\vn+\vk}}=\delta_{\vec{0},\vk}.
\end{equation}
On the other hand, $-\vk$ generates
\begin{equation}\label{pf:eq2}
\sum_{\vn \in \Lambda_A \cap (\Lambda_A + \vk)} h_{\vn}\overline{h_{\vn-\vk}}=\delta_{\vec{0},-\vk}.
\end{equation}
Replacing $\vn = \vm + \vk$ in \eqref{pf:eq2}, we have $\sum_{\vm \in (\Lambda_A - \vk) \cap \Lambda_A} h_{\vm+\vk}\overline{h_{\vm}}=\delta_{\vec{0},-\vk}$.
The conjugate of this equation is
\[\sum_{\vm \in (\Lambda_A - \vk) \cap \Lambda_A} h_{\vm} \overline{h_{\vm+\vk}}=\delta_{\vec{0},-\vk}=\delta_{\vec{0},\vk},\]
which is the same as \eqref{pf:eq1}.
Hence $\vk \sim -\vk$.
\end{proof}

By the Axiom of Choice, there exists a choice function from $\{P_{j,\Lambda_A} \mid j\geq 0\} \rightarrow A\Z^d$ with range $\{\vec{0},\vk_1,\vk_2,...\}$,
where $\vk_j \in P_{j,\Lambda_A}$.
We call each of these choices an \textit{index set for the system of equations}.
It is clear that the following lemma characterizes index sets:
\begin{lemma}\label{lm:indexset}
A subset $E$ of $A\Z^d$ is an index set for the system  $\mathcal{E}_{(\Lambda_A,A,d)}$ if and only if it satisfies the following conditions (A)-(C):
\begin{description}
  \item[(A)] Every element $\vk$ in the set $E$  generates a (non-trivial) equation in  $\mathcal{E}_{(\Lambda_A,A,d)}$.
  \item[(B)] Two distinct elements $\vk_1,\vk_2$ in the set $E$ generate two distinct equations in  $\mathcal{E}_{(\Lambda_A,A,d)}$.
  \item[(C)] Each equation in  $\mathcal{E}_{(\Lambda_A,A,d)}$ has its generator in $E$.
\end{description}
\end{lemma}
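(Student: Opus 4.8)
The plan is to unwind the definition of an index set. By construction, $E$ is an index set for $\mathcal{E}_{(\Lambda_A,A,d)}$ exactly when $E$ is the range of some choice function $c$ that picks one element from each block $P_{j,\Lambda_A}$ with $j\ge 0$ of the partition $\P_{\Lambda_A}$ and picks nothing from the exceptional block $P_{\emptyset,\Lambda_A}$; equivalently, $E$ is disjoint from $P_{\emptyset,\Lambda_A}$ and meets each $P_{j,\Lambda_A}$, $j\ge 0$, in exactly one point. So I would prove the lemma by showing that conditions (A)--(C) are together equivalent to this ``exactly one representative per block'' description.

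For the forward direction I would assume $E$ is an index set with witnessing choice function $c$ and check (A)--(C) directly. For (A): each $\vk\in E$ equals $c(P_{j,\Lambda_A})$ for some $j\ge 0$, and every element of such a block generates a non-trivial equation of $\mathcal{E}_{(\Lambda_A,A,d)}$ (including $P_{0,\Lambda_A}=\{\vec 0\}$, which generates $\sum_{\vn\in\Lambda_A}|h_\vn|^2=1$). For (B): distinct $\vk_1,\vk_2\in E$ must lie in distinct blocks because $c$ is single-valued, and elements of distinct blocks are not related under $\sim$, so neither generates the same equation as, nor the conjugate of, the other; in particular they generate distinct equations. For (C): any equation of $\mathcal{E}_{(\Lambda_A,A,d)}$ is generated by some $\vk\in A\Z^d$; since it is non-trivial, $\vk$ lies in some $P_{j,\Lambda_A}$ with $j\ge 0$, and then $c(P_{j,\Lambda_A})\in E$ lies in the same block and hence generates the same equation.

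For the converse I would assume $E$ satisfies (A)--(C) and build the choice function. By (A) no element of $E$ lies in $P_{\emptyset,\Lambda_A}$ and every element lies in some $P_{j,\Lambda_A}$ with $j\ge 0$. By (B), $E$ meets each such block in at most one point, since two elements of $E$ in one block would generate the same equation (up to conjugation), contradicting (B). By (C), $E$ meets each block $P_{j,\Lambda_A}$, $j\ge 0$, since that block corresponds to an equation of $\mathcal{E}_{(\Lambda_A,A,d)}$ whose generator lies in $E$ by hypothesis. Hence $E$ contains exactly one representative of each $P_{j,\Lambda_A}$, $j\ge 0$; setting $c(P_{j,\Lambda_A})$ equal to that representative defines a choice function with range $E$, so $E$ is an index set.

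The only step requiring a little care is the bookkeeping between blocks of $\P_{\Lambda_A}$ and equations of $\mathcal{E}_{(\Lambda_A,A,d)}$: one must know that each block with $j\ge 0$ corresponds to exactly one equation of the reduced system (and conversely), and that $\vk$ and $-\vk$ fall in the same block and produce the same relation. That is precisely what Lemma \ref{lm:vklambda} and Lemma \ref{lm:pmk} provide, the computation in the proof of Lemma \ref{lm:pmk} showing $\vk$ and $-\vk$ give conjugate (hence identical) equations. Beyond this translation there is no analytic or combinatorial obstacle, which is why the lemma is labelled as essentially immediate.
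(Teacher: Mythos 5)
Your argument is correct: the paper gives no proof of this lemma, introducing it with ``It is clear that the following lemma characterizes index sets,'' and your block-by-block verification (one representative per $P_{j,\Lambda_A}$, $j\ge 0$, none from $P_{\emptyset,\Lambda_A}$) is exactly the routine unpacking being elided, with the needed bookkeeping correctly delegated to Lemmas \ref{lm:vklambda} and \ref{lm:pmk}. No gap.
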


Let $\Lambda_A^E$ be one of the index sets.
The system $\mathcal{E}_{(\Lambda_A,A,d)}$ has the form:
\begin{equation}\label{eq:reducedA}
\mathcal{E}_{(\Lambda_A,A,d)}: ~\left\{\begin{array}{l}
\sum_{\vec{n}\in \Lambda_A}h_{\vec{n}}\overline{h_{\vec{n}+\vec{k}}}=\delta_{\vec{0} \vec{k}},~ \vec{k}\in \Lambda_A^E,\\
\sum_{\vec{n}\in \Lambda_A}h_{\vec{n}}=\sqrt{2}.
\end{array}\right.
\end{equation}

\begin{lemma}\label{lm:subsetE}
For $\Lambda' \subseteq \Lambda\subset \Z^d$, every index set for $\mathcal{E}_{(\Lambda,A,d)}$ contains one and only one index set for $\mathcal{E}_{(\Lambda',A,d)}$.
\end{lemma}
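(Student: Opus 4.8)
The plan is to reduce the statement to an elementary fact about sign-symmetric sets by peeling off from the notion of ``index set'' everything that depends on the support. For a nonempty subset $\Lambda\subseteq\Z^d$ write
\[
G(\Lambda)\;=\;\{\vk\in A\Z^d : \Lambda\cap(\Lambda-\vk)\neq\emptyset\}
\]
for the set of vectors that generate a non-trivial equation of $\mathcal{E}_{(\Lambda,A,d)}$. By Lemma \ref{lm:vklambda} this set is symmetric, $\vk\in G(\Lambda)\iff-\vk\in G(\Lambda)$, and it contains $\vec{0}$; and by Lemma \ref{lm:pmk} two elements of $G(\Lambda)$ generate the same equation (up to conjugation) if and only if they differ by a sign. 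Since $\vec{0}$ is the only $\vk\in\Z^d$ with $\vk=-\vk$, conditions (A)--(C) of Lemma \ref{lm:indexset} translate into: $E$ is an index set for $\mathcal{E}_{(\Lambda,A,d)}$ if and only if $E\cup(-E)=G(\Lambda)$ and $E\cap(-E)=\{\vec{0}\}$, i.e. $E$ contains $\vec{0}$ and exactly one element of each pair $\{\vk,-\vk\}\subseteq G(\Lambda)$. The important feature is that, by Lemma \ref{lm:pmk}, ``relatedness'' among non-trivial generators is literally ``$=\pm$'', with no reference to the support, which is what makes the two systems comparable.

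Now $\Lambda'\subseteq\Lambda$ forces $\Lambda'\cap(\Lambda'-\vk)\subseteq\Lambda\cap(\Lambda-\vk)$, hence $G(\Lambda')\subseteq G(\Lambda)$. Given an index set $E$ for $\mathcal{E}_{(\Lambda,A,d)}$, I would set $E'=E\cap G(\Lambda')$ and check, via Lemma \ref{lm:indexset}, that $E'$ is an index set for $\mathcal{E}_{(\Lambda',A,d)}$: condition (A) is immediate since $E'\subseteq G(\Lambda')$; for (B), distinct elements of $E'\subseteq E$ are not $\pm$ of one another (this property of $E$ is (B) for $\mathcal{E}_{(\Lambda,A,d)}$ read through Lemma \ref{lm:pmk}), so by Lemma \ref{lm:pmk} applied to $\Lambda'$ they generate distinct equations of $\mathcal{E}_{(\Lambda',A,d)}$; and for (C), any $\vk\in G(\Lambda')\subseteq G(\Lambda)$ has, by (C) for $E$, some $\vk'\in E$ with $\vk'=\pm\vk$, and then $\vk'\in G(\Lambda')$ by symmetry of $G(\Lambda')$, so $\vk'\in E\cap G(\Lambda')=E'$ and $\vk'$ generates the same equation as $\vk$. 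This gives the existence half.

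For uniqueness, let $E''\subseteq E$ be another index set for $\mathcal{E}_{(\Lambda',A,d)}$. Condition (A) for $E''$ gives $E''\subseteq G(\Lambda')$, so $E''\subseteq E\cap G(\Lambda')=E'$. For the reverse inclusion, take $\vk\in E'\subseteq G(\Lambda')$; condition (C) for $E''$ produces $\vk''\in E''$ with $\vk''=\pm\vk$. Both $\vk$ and $\vk''$ lie in $E$ and, by (A) for $E$, in $G(\Lambda)$, so if $\vk\neq\vk''$ they would be two distinct elements of $E$ differing by a sign, contradicting (B) for $E$ together with Lemma \ref{lm:pmk}. Hence $\vk=\vk''\in E''$, so $E'\subseteq E''$ and therefore $E''=E'$. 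The only point needing care is keeping straight which support each occurrence of the relation $\sim$ refers to; Lemma \ref{lm:pmk} dissolves that issue by rewriting $\sim$ as $\pm$ on non-trivial generators, and everything else is bookkeeping with the inclusions $G(\Lambda')\subseteq G(\Lambda)$ and the formula $E'=E\cap G(\Lambda')$.
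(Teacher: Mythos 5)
Your proof is correct and takes essentially the same route as the paper's: your set $E'=E\cap G(\Lambda')$ is exactly the paper's ``collect all elements in $\Lambda^E$ that generate equations in $\mathcal{E}_{(\Lambda',A,d)}$'', justified by the same appeal to Lemmas \ref{lm:vklambda} and \ref{lm:pmk}. The only difference is that you spell out the verification of conditions (A)--(C) and the uniqueness half explicitly, which the paper leaves implicit.
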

\begin{proof}
Let $\Lambda^E$ be an index set for $\mathcal{E}_{(\Lambda,A,d)}$.
Given any equation in $\mathcal{E}_{(\Lambda',A,d)}$, if it is generated by an element $\vk \in A\Z^d$, then by Lemma \ref{lm:vklambda}, $\Lambda'\cap(\Lambda'-\vk)\neq \emptyset$.
Since $\Lambda' \subseteq \Lambda$, $\Lambda\cap(\Lambda-\vk)\neq \emptyset$.
This $\vk$ generates an equation in $\mathcal{E}_{(\Lambda,A,d)}$, by Lemma \ref{lm:vklambda} again.
Hence by Lemma \ref{lm:pmk}, $\Lambda^E$ contains $\vk$ or $-\vk$, but not both.
Collect all elements in $\Lambda^E$ that generate equations in $\mathcal{E}_{(\Lambda',A,d)}$.
This is an index set for $\mathcal{E}_{(\Lambda',A,d)}$.

\end{proof}

Let $B$ be an $s\times s$ expansive dyadic integral matrix, and $\Lambda_B \subset \Z^s$.
And
\begin{align*}
\mathcal{E}_{(\Lambda_B,B,s)}: ~\left\{\begin{array}{l}
\sum_{\vm\in \Lambda_B}h'_{\vm}\overline{h'_{\vm+\vl}}=\delta_{\vec{0} \vl},~ \vl\in \Lambda_B^E,\\
\sum_{\vm\in \Lambda_B}h'_{\vm}=\sqrt{2}.
\end{array}\right.
\end{align*}

\begin{definition}\label{def:EQNISO}
$\mathcal{E}_{(\Lambda_A,A,d)}, \mathcal{E}_{(\Lambda_B,B,s)} \in \mathfrak{E}$ are isomorphic,
$\mathcal{E}_{(\Lambda_A,A,d)}\sim\mathcal{E}_{(\Lambda_B,B,s)}$
 if there exist
\begin{description}
\item[(A)] a bijection $\theta: \Lambda_A \rightarrow \Lambda_B$ and
\item[(B)] a bijection $\eta$ from an index set $\Lambda_A^E$ of $\mathcal{E}_{(\Lambda_A,A,d)}$ onto an index set $\Lambda_B^E$ of $\mathcal{E}_{(\Lambda_B,B,s)}$
\end{description}
with the following properties:
for each $\vk\in \Lambda_A^E$, the equation in $\mathcal{E}_{(\Lambda_B,B,s)}$ generated by $\vl\equiv\eta(\vk)$ is obtained
by replacing $h_{\vn}$ by $h' _{\theta(\vn)}$ and $\delta_{\vec{0}\vk}$ by $\delta_{\vec{0}\vl}$ in the equation in $\mathcal{E}_{(\Lambda_A,A,d)}$ generated by $\vk$.
\end{definition}

\begin{proposition}\label{prop:equiv}
The isomorphism defined in Definition \ref{def:EQNISO} is an equivalence relation on $\mathfrak{E}$.
\end{proposition}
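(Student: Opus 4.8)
The plan is to verify reflexivity, symmetry, and transitivity directly from Definition \ref{def:EQNISO}, the only genuinely delicate point being transitivity. Throughout I use that index sets exist (Axiom of Choice, as noted after Lemma \ref{lm:pmk}) and that, by Lemma \ref{lm:pmk}, an element $\vk$ and $-\vk$ generate the same equation of a reduced system (an equation being identified with its conjugate form, and substitution of variables commuting with conjugation).

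For \emph{reflexivity} I would take $\theta=\mathrm{id}_{\Lambda_A}$ and $\eta=\mathrm{id}$ on any fixed index set $\Lambda_A^E$; the required substitution $h_\vn\mapsto h_{\theta(\vn)}$ is then the identity, so $\mathcal{E}_{(\Lambda_A,A,d)}\sim\mathcal{E}_{(\Lambda_A,A,d)}$.

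For \emph{symmetry}, suppose $\mathcal{E}_{(\Lambda_A,A,d)}\sim\mathcal{E}_{(\Lambda_B,B,s)}$ is witnessed by a bijection $\theta:\Lambda_A\to\Lambda_B$ and a bijection $\eta:\Lambda_A^E\to\Lambda_B^E$ of index sets. I claim $(\theta^{-1},\eta^{-1})$ witnesses $\mathcal{E}_{(\Lambda_B,B,s)}\sim\mathcal{E}_{(\Lambda_A,A,d)}$. Indeed, given $\vl\in\Lambda_B^E$ and $\vk=\eta^{-1}(\vl)$, the defining hypothesis says the equation generated by $\vl$ is obtained from the equation generated by $\vk$ by the substitution $h_\vn\mapsto h'_{\theta(\vn)}$ and $\delta_{\vec0\vk}\mapsto\delta_{\vec0\vl}$; since this substitution is a bijection of the variables, applying $h'_\vm\mapsto h_{\theta^{-1}(\vm)}$ and $\delta_{\vec0\vl}\mapsto\delta_{\vec0\vk}$ recovers the equation generated by $\vk$, which is exactly what Definition \ref{def:EQNISO} requires in the reverse direction.

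For \emph{transitivity}, suppose $\mathcal{E}_{(\Lambda_A,A,d)}\sim\mathcal{E}_{(\Lambda_B,B,s)}$ via $(\theta_1,\eta_1)$ with $\eta_1:\Lambda_A^{E_1}\to\Lambda_B^{E_2}$, and $\mathcal{E}_{(\Lambda_B,B,s)}\sim\mathcal{E}_{(\Lambda_C,C,r)}$ via $(\theta_2,\eta_2)$ with $\eta_2:\Lambda_B^{E_3}\to\Lambda_C^{E_4}$. The composition $\theta=\theta_2\circ\theta_1:\Lambda_A\to\Lambda_C$ is a bijection, so the only real work is to produce a bijection between an index set of $\mathcal{E}_{(\Lambda_A,A,d)}$ and one of $\mathcal{E}_{(\Lambda_C,C,r)}$; the obstacle is that the codomain $\Lambda_B^{E_2}$ of $\eta_1$ need not coincide with the domain $\Lambda_B^{E_3}$ of $\eta_2$. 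I would resolve this with Lemma \ref{lm:pmk}: each part $P_{j,\Lambda_B}$ with $j>0$ of the partition $\P_{\Lambda_B}$ has exactly the two elements $\pm\vk$, so any two index sets for $\mathcal{E}_{(\Lambda_B,B,s)}$ are matched by the sign-swap bijection $\sigma:\Lambda_B^{E_2}\to\Lambda_B^{E_3}$ sending $\vl$ to the unique element of $\{\vl,-\vl\}\cap\Lambda_B^{E_3}$, and $\vl$ and $\sigma(\vl)$ generate the same equation. Then $\eta=\eta_2\circ\sigma\circ\eta_1:\Lambda_A^{E_1}\to\Lambda_C^{E_4}$ is a bijection of index sets, and chaining the two substitution properties through the identification $\sigma$ shows that for each $\vk\in\Lambda_A^{E_1}$ the equation generated by $\eta(\vk)$ is obtained from the equation generated by $\vk$ by the substitution $h_\vn\mapsto h''_{\theta(\vn)}$ together with the corresponding change of Kronecker symbol. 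Hence $(\theta,\eta)$ witnesses $\mathcal{E}_{(\Lambda_A,A,d)}\sim\mathcal{E}_{(\Lambda_C,C,r)}$, and the main difficulty—matching the two a priori different index sets for the middle system—is exactly the step handled by Lemma \ref{lm:pmk}.
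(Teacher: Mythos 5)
Your proof is correct and follows the same basic route as the paper's: identity maps for reflexivity, inverse bijections for symmetry, and composition for transitivity. The one substantive difference is your sign-swap map $\sigma$. The paper simply sets $\eta \equiv \eta_2\circ\eta_1$ and leaves the verification to the reader, which tacitly assumes that the codomain of $\eta_1$ (one index set for $\mathcal{E}_{(\Lambda_B,B,s)}$) coincides with the domain of $\eta_2$ (another index set for the same system); since Definition \ref{def:EQNISO} only asks for \emph{some} index sets, these need not agree, and without your $\sigma$ the composition is not literally well defined. Your fix is exactly right: by Lemma \ref{lm:pmk} each nonzero class $\{\vk,-\vk\}$ meets every index set in exactly one point, $\vk$ and $-\vk$ generate the same equation, and $\delta_{\vec{0}\vk}=\delta_{\vec{0},-\vk}$, so $\eta_2\circ\sigma\circ\eta_1$ carries the substitution property through. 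In short: same approach as the paper, but your version closes a small well-definedness gap that the paper's proof glosses over.
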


\begin{proof}
(Reflexivity) It is obvious that the relation is reflexive.

(Symmetry) Let $\mathcal{E}_{(\Lambda_A,A,d)} \sim \mathcal{E}_{(\Lambda_B,B,s)}$ with $\theta,\eta$ as the bijections.
It is easy to verify that $\theta^{-1},\eta^{-1}$ are the bijections for $\mathcal{E}_{(\Lambda_B,B,s)} \sim \mathcal{E}_{(\Lambda_A,A,d)}$.

(Transitivity) Let $\mathcal{E}_{(\Lambda_A,A,d)}, \mathcal{E}_{(\Lambda_B,B,s)}, \mathcal{E}_{(\Lambda_C,C,t)} \in \mathfrak{E}$,
and $\mathcal{E}_{(\Lambda_A,A,d)} \sim \mathcal{E}_{(\Lambda_B,B,s)}$ with $\theta_1,\eta_1$ as the bijections,
$\mathcal{E}_{(\Lambda_B,B,s)} \sim \mathcal{E}_{(\Lambda_C,C,t)}$ with $\theta_2,\eta_2$ as the bijections.
The reader can verify that the mappings $\theta \equiv \theta_2 \circ \theta_1$ and $\eta \equiv \eta_2 \circ \eta_1$ are the bijections from $\Lambda_A$ to $\Lambda_C$ and from $\Lambda_A^E$ to $\Lambda_C^E$, respectively.
Then when we replace variables $h_{\vn}$ by $h' _{\theta(\vn)}$ and replace $\delta_{\vec{0} \vk}$ by $\delta_{\vec{0},  \eta(\vec{k})}$ in equations of $\mathcal{E}_{(\Lambda_A,A,d)}$, we will obtain all equations in $\mathcal{E}_{(\Lambda_C,C,t)}$.

\end{proof}

The next Proposition \ref{prop:iso} provides sufficient conditions for this isomorphism.

\begin{proposition}\label{prop:iso}
Let $\mathcal{E}_{(\Lambda_A,A,d)}$ and $\mathcal{E}_{(\Lambda_B,B,s)}$ be elements in $\mathfrak{E}$ with index sets $\Lambda_A^E$ and $\Lambda_B^E$, respectively.
Assume there exist bijections
\[\theta:\Lambda_A \rightarrow \Lambda_B \text{ and } \eta: \Lambda_A^E \rightarrow \Lambda_B^E,\]
with properties that an extension of $\theta$ to new domain $\Z^d$ (denoted as $\Theta$) satisfies
\begin{equation*}
\Theta(\vn+\vk) = \theta(\vn) + \eta(\vk),~ \forall\vk \in \Lambda_A ^E \text{ and }\forall\vn \in \Lambda_A.
\end{equation*}
Then $\mathcal{E}_{(\Lambda_A,A,d)}$ and $\mathcal{E}_{(\Lambda_B,B,s)}$ are isomorphic.
\end{proposition}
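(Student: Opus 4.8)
The plan is to verify directly the defining conditions of Definition~\ref{def:EQNISO} for the data $\theta,\eta,\Theta$ provided. Conditions (A) and (B) there are handed to us: $\theta:\Lambda_A\to\Lambda_B$ and $\eta:\Lambda_A^E\to\Lambda_B^E$ are the required bijections, and $\Lambda_A^E,\Lambda_B^E$ are index sets by hypothesis. So everything reduces to the ``replacement'' property: for each $\vk\in\Lambda_A^E$, writing $\vl=\eta(\vk)$, the equation of $\mathcal{E}_{(\Lambda_B,B,s)}$ generated by $\vl$ must equal the equation of $\mathcal{E}_{(\Lambda_A,A,d)}$ generated by $\vk$ after the substitution $h_{\vn}\mapsto h'_{\theta(\vn)}$, $\delta_{\vec{0}\vk}\mapsto\delta_{\vec{0}\vl}$. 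First I would pin down $\eta(\vec{0})$: since $\vec{0}\in\Lambda_A^E$ (it is the forced representative of the class $P_{0,\Lambda_A}=\{\vec{0}\}$), applying the displayed relation with $\vk=\vec{0}$ to any $\vn\in\Lambda_A$ gives $\theta(\vn)=\Theta(\vn)=\theta(\vn)+\eta(\vec{0})$, hence $\eta(\vec{0})=\vec{0}$. Thus $\eta$ maps $\Lambda_A^E\setminus\{\vec{0}\}$ onto $\Lambda_B^E\setminus\{\vec{0}\}$; the $\vec{0}$-equations $\sum_{\vn\in\Lambda_A}|h_{\vn}|^2=1$ and $\sum_{\vm\in\Lambda_B}|h'_{\vm}|^2=1$ correspond because $\theta$ is a bijection of $\Lambda_A$ onto $\Lambda_B$; and every Kronecker symbol is matched since the right side equals $1$ exactly for the $\vec{0}$-equation on each side.

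Next, fix $\vk\in\Lambda_A^E\setminus\{\vec{0}\}$ and put $\vl=\eta(\vk)$. Recall the equation generated by $\vk$ reads $\sum_{\vn\in\Lambda_A\cap(\Lambda_A-\vk)}h_{\vn}\overline{h_{\vn+\vk}}=\delta_{\vec{0}\vk}$. For each $\vn$ in that index set one has $\vn+\vk\in\Lambda_A$, so $\Theta(\vn+\vk)=\theta(\vn+\vk)$, and the hypothesis on $\Theta$ gives $\theta(\vn+\vk)=\theta(\vn)+\eta(\vk)=\theta(\vn)+\vl$. Hence the substitution carries the equation to $\sum_{\vn\in\Lambda_A\cap(\Lambda_A-\vk)}h'_{\theta(\vn)}\overline{h'_{\theta(\vn)+\vl}}=\delta_{\vec{0}\vl}$, which after reindexing by $\vm=\theta(\vn)$ becomes $\sum_{\vm\in\theta(\Lambda_A\cap(\Lambda_A-\vk))}h'_{\vm}\overline{h'_{\vm+\vl}}=\delta_{\vec{0}\vl}$. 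Meanwhile the equation of $\mathcal{E}_{(\Lambda_B,B,s)}$ generated by $\vl$ is $\sum_{\vm\in\Lambda_B\cap(\Lambda_B-\vl)}h'_{\vm}\overline{h'_{\vm+\vl}}=\delta_{\vec{0}\vl}$, and $\vl\neq\vec{0}$ since $\eta$ is a bijection fixing $\vec{0}$, so both $\delta$-terms vanish. So the whole matter comes down to the set identity $\theta\bigl(\Lambda_A\cap(\Lambda_A-\vk)\bigr)=\Lambda_B\cap(\Lambda_B-\vl)$.

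The inclusion ``$\subseteq$'' is immediate: if $\vn,\vn+\vk\in\Lambda_A$ then $\theta(\vn)\in\Lambda_B$ and $\theta(\vn)+\vl=\theta(\vn+\vk)\in\Lambda_B$, so $\theta(\vn)\in\Lambda_B\cap(\Lambda_B-\vl)$. For ``$\supseteq$'', take $\vm\in\Lambda_B$ with $\vm+\vl\in\Lambda_B$; put $\vn=\theta^{-1}(\vm)$ and $\vec{p}=\theta^{-1}(\vm+\vl)$, both in $\Lambda_A$. It suffices to show $\vn+\vk\in\Lambda_A$, for then $\theta(\vn+\vk)=\theta(\vn)+\vl=\vm+\vl$, whence $\vm=\theta(\vn)\in\theta(\Lambda_A\cap(\Lambda_A-\vk))$. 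Here one invokes the extension $\Theta$: the relation gives $\Theta(\vn+\vk)=\theta(\vn)+\vl=\vm+\vl=\theta(\vec{p})=\Theta(\vec{p})$, and one-to-one-ness of $\Theta$ then forces $\vn+\vk=\vec{p}\in\Lambda_A$. With the set identity established for every $\vk\in\Lambda_A^E$, the substituted equation and the equation generated by $\vl=\eta(\vk)$ are identical, so all requirements of Definition~\ref{def:EQNISO} are met and $\mathcal{E}_{(\Lambda_A,A,d)}\sim\mathcal{E}_{(\Lambda_B,B,s)}$.

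I expect the inclusion ``$\supseteq$'' to be the only genuine obstacle. Fixing $\eta(\vec{0})=\vec{0}$, performing the substitution, reindexing, and handling the Kronecker terms are formal consequences of the hypothesized relation and of $\theta,\eta$ being bijections; but ``$\supseteq$'' is exactly where the extension $\Theta$ enters essentially — one needs $\Theta$ to be injective so that a lattice point $\vn+\vk$ lying outside $\Lambda_A$ cannot have its $\Theta$-image coincide with $\theta(\vec{p})$ for some $\vec{p}\in\Lambda_A$; this is what keeps the support of the substituted equation from being strictly smaller than that of the equation generated by $\vl$. Accordingly I would record the one-to-one-ness of the relevant $\Theta$ (which, in the intended applications, arises from a lattice isomorphism and is automatically injective) as the key point, making it explicit in the hypothesis if needed.
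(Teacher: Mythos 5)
Your reduction of the whole matter to the set identity $\theta\bigl(\Lambda_A\cap(\Lambda_A-\vk)\bigr)=\Lambda_B\cap\bigl(\Lambda_B-\eta(\vk)\bigr)$ is exactly right, and your diagnosis of where the difficulty sits is sharper than the paper's own treatment. The paper's proof performs the substitution, reindexes the sum by $\vm=\theta(\vn)$, and simply declares the result to be the equation generated by $\vl=\eta(\vk)$; it never addresses your inclusion ``$\supseteq$'', i.e.\ the possibility that the equation generated by $\vl$ in $\mathcal{E}_{(\Lambda_B,B,s)}$ contains a nonzero term $h'_{\vm}\overline{h'_{\vm+\vl}}$ with $\theta^{-1}(\vm)+\vk\notin\Lambda_A$. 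Your proof of ``$\supseteq$'' uses injectivity of $\Theta$, which is not among the hypotheses --- the paper explicitly remarks that $\Theta$ ``is required to be neither one-to-one nor onto.'' So what you have proved is a correctly strengthened statement, not the stated one.

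The gap is not cosmetic: as stated, the proposition is false. Take $d=s=1$, $A=B=[2]$, $\Lambda_A=\{0,1,2,6\}$ with index set $\Lambda_A^E=\{0,2,4,6\}$, and $\Lambda_B=\{0,2,4,6\}$ with index set $\Lambda_B^E=\{0,2,4,6\}$. Let $\theta(0)=0$, $\theta(1)=4$, $\theta(2)=2$, $\theta(6)=6$, let $\eta$ be the identity, and set $\Theta(n+k)\equiv\theta(n)+\eta(k)$ on $\Lambda_A+\Lambda_A^E$ (one checks this is consistent, extends $\theta$, and is not injective, e.g.\ $\Theta(1)=\Theta(4)=4$). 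All hypotheses of Proposition \ref{prop:iso} hold, yet $k=2$ generates the one-term equation $h_0\overline{h_2}=0$ in $\mathcal{E}_{(\Lambda_A,A,1)}$ while $\eta(2)=2$ generates the three-term equation $h'_0\overline{h'_2}+h'_2\overline{h'_4}+h'_4\overline{h'_6}=0$ in $\mathcal{E}_{(\Lambda_B,B,1)}$; since the replacement in Definition \ref{def:EQNISO} is term-by-term and $\theta$ is a bijection, no choice of bijections can match a one-term equation with a three-term one, so the two systems are not isomorphic. The remedy is the one you propose: add to the hypotheses the injectivity of $\Theta$ on $\Lambda_A+\Lambda_A^E$ (or, what is all that is used, the implication $\Theta(\vn+\vk)\in\Lambda_B\Rightarrow\vn+\vk\in\Lambda_A$). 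In the paper's applications the extension is $f_{d,N}$, whose injectivity on the relevant sets can be checked (compare Lemma \ref{lm:fdN_inj} and the explicit arguments in Propositions \ref{prop:eta_lambda_dE} and \ref{prop:subsetlambda}), so the downstream theorems can be repaired, but Proposition \ref{prop:iso} itself cannot stand without the extra hypothesis.
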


Notice that this extension $\Theta$ is required to be neither one-to-one nor onto.

\begin{proof}
We will prove by definition.

(A) The equation $\Sigma_{\vec{n}\in \Lambda_A} h_{\vec{n}} = \sqrt{2}$ is in $\mathcal{E}_{(\Lambda_A,A,d)}$.
We replace $h_{\vn}$ by $h' _{\theta(\vn)}$ to obtain $\Sigma_{\vec{n}\in \Lambda_A} h'_{\theta(\vec{n})} = \sqrt{2}$.
Notice that $\theta(\Lambda_A) = \Lambda_B$,
we have the equation $\Sigma_{\vec{m}\in \Lambda_B} h'_{\vec{m}} = \sqrt{2}$ in $\mathcal{E}_{(\Lambda_B,B,s)}$.

(B) Let $\vk\in\Lambda_A^E$ and denote $\vl = \eta(\vk)\in \Lambda_B^E$.
$\vk$ generates the equation in $\mathcal{E}_{(\Lambda_A,A,d)}$
\[\sum_{\vn\in\Lambda_A}h_\vn  \overline{h_{\vn+\vec{k}}}=\delta_{\vec{0} \vk}.\]
Notice that the non-trivial products in this equation are those with indices $\vn,\vn+\vk \in \Lambda_A$.
By assumption,  $\theta(\vn + \vk)=\Theta(\vn + \vk)=\theta(\vn) + \eta(\vk) = \theta(\vn) +\vl$.
So replacing $h_{\vn}$ by $h' _{\theta(\vn)}$, $h_{\vn+\vk}$ by $h' _{\theta(\vn+\vk)}=h' _{\theta(\vn)+\vl}$ and $\delta_{\vec{0} \vk}$ by $\delta_{\vec{0}, \eta(\vk)}=\delta_{\vec{0} \vl}$,
the above equation becomes
\[\sum_{\vn\in\Lambda_A}h'_{\theta(\vn)}  \overline{h'_{\theta(\vn)+\vl}}=\delta_{\vec{0} \vl}.\]
This is the equation in $\mathcal{E}_{(\Lambda_B,B,s)}$ generated by $\vl$, i.e.
$\sum_{\vm\in\Lambda_B}h'_{\vm}  \overline{h'_{\vm+\vl}}=\delta_{\vec{0} \vl}$.

Since $\eta$ is an onto mapping, we obtained all equations in $\mathcal{E}_{(\Lambda_B,B,s)}$.

\end{proof}

\begin{definition}\label{def:PHIISO}
Let $\varphi_A$ be derived from $(\mathcal{S}_A, \mathcal{E}_{(\Lambda_A,A,d)})$ where $\mathcal{S}_A=\{a_\vn \mid \vn\in\Lambda_A\}$,
and $\varphi_B$ be derived from $(\mathcal{S}_B, \mathcal{E}_{(\Lambda_B,B,s)})$ where $\mathcal{S}_B=\{b_\vm \mid \vm\in\Lambda_B\}$.
$\varphi_A, \varphi_B $ are algebraically isomorphic, $\displaystyle \varphi_A \simeq \varphi_B $, if the following conditions are satisfied:
\begin{description}
  \item[(A)] $\mathcal{E}_{(\Lambda_A,A,d)}$ and $\mathcal{E}_{(\Lambda_B,B,s)}$ are isomorphic in $\mathfrak{E}$.
  \item[(B)] $b_{\theta(\vn)} = a _{\vn}, \forall \vn\in\Lambda_A$,
where $\theta$ is the bijection from $\Lambda_A$ onto $\Lambda_B$ in the isomorphism in (A).
\end{description}
\end{definition}
The reader will verify that following lemma is immediate from Definitions \ref{def:EQNISO} and \ref{def:PHIISO}.
\begin{lemma}\label{lm:phiiso}
For isomorphic systems $\mathcal{E}_{(\Lambda_A,A,d)}$ and $\mathcal{E}_{(\Lambda_B,B,s)}$ with bijection $\theta$ from $\Lambda_A$ to $\Lambda_B$,
if $\mathcal{S}_A=\{a_\vn \mid \vn\in\Lambda_A\}$ is a solution to $\mathcal{E}_{(\Lambda_A,A,d)}$,
then the set $\mathcal{S}_B \equiv \{b_\vm = a_{\theta^{-1}(\vm)} \mid \vm\in\Lambda_B\}$ is a solution to $\mathcal{E}_{(\Lambda_B,B,s)}$.
Moreover, the scaling functions derived from $(\mathcal{S}_A, \mathcal{E}_{(\Lambda_A,A,d)})$ and $(\mathcal{S}_B, \mathcal{E}_{(\Lambda_B,B,s)})$
are algebraically isomorphic.
\end{lemma}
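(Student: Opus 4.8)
The plan is to verify first that $\mathcal{S}_B=\{b_\vm\mid\vm\in\Lambda_B\}$ is a solution of $\mathcal{E}_{(\Lambda_B,B,s)}$, and then to observe that the \emph{moreover} clause is nothing more than an unwinding of Definition \ref{def:PHIISO}. By hypothesis $\mathcal{E}_{(\Lambda_A,A,d)}\sim\mathcal{E}_{(\Lambda_B,B,s)}$, so Definition \ref{def:EQNISO} hands us the bijection $\theta:\Lambda_A\to\Lambda_B$ used to define the $b_\vm$'s, together with a bijection $\eta$ from an index set $\Lambda_A^E$ onto an index set $\Lambda_B^E$, and the crucial structural fact: every equation of $\mathcal{E}_{(\Lambda_B,B,s)}$ is the formal image, under $h_\vn\mapsto h'_{\theta(\vn)}$ and $\delta_{\vec 0\vk}\mapsto\delta_{\vec 0\vl}$, of a unique equation of $\mathcal{E}_{(\Lambda_A,A,d)}$.

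For the normalization equation, bijectivity of $\theta$ gives $\sum_{\vm\in\Lambda_B}b_\vm=\sum_{\vn\in\Lambda_A}b_{\theta(\vn)}=\sum_{\vn\in\Lambda_A}a_\vn=\sqrt2$. For an orthogonality equation, fix $\vl\in\Lambda_B^E$, put $\vk=\eta^{-1}(\vl)\in\Lambda_A^E$, and recall that by Definition \ref{def:EQNISO} the $\vl$-equation $\sum_{\vm\in\Lambda_B}h'_\vm\overline{h'_{\vm+\vl}}=\delta_{\vec 0\vl}$ is obtained from $\sum_{\vn\in\Lambda_A}h_\vn\overline{h_{\vn+\vk}}=\delta_{\vec 0\vk}$ by substituting $h_\vn\mapsto h'_{\theta(\vn)}$; matching the two formal sums term by term forces $\theta(\vn+\vk)=\theta(\vn)+\vl$ on the range of $\vn$ with $\vn,\vn+\vk\in\Lambda_A$. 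Substituting $h'_\vm=b_\vm=a_{\theta^{-1}(\vm)}$ then turns the left-hand side of the $\vl$-equation into $\sum_\vn b_{\theta(\vn)}\overline{b_{\theta(\vn+\vk)}}=\sum_\vn a_\vn\overline{a_{\vn+\vk}}$, which equals $\delta_{\vec 0\vk}$ because $\mathcal{S}_A$ solves $\mathcal{E}_{(\Lambda_A,A,d)}$. Finally $\delta_{\vec 0\vk}=\delta_{\vec 0\vl}$: since $\eta$ carries index sets to index sets it must send the generator of the norm equation $\sum|h_\vn|^2=1$ to the generator of the norm equation of $\mathcal{E}_{(\Lambda_B,B,s)}$, and that generator is the unique element $\vec 0$, so $\eta(\vec 0)=\vec 0$ and hence $\vk=\vec 0\iff\vl=\vec 0$. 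As $\eta$ is onto, every orthogonality equation of $\mathcal{E}_{(\Lambda_B,B,s)}$ is covered, so $\mathcal{S}_B$ is a solution; moreover its support is $\theta(\Lambda_A)=\Lambda_B$, a relabeling of $\Lambda_A$, so $\mathcal{S}_B$ is finite whenever $\mathcal{S}_A$ is.

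For the moreover clause, $\mathcal{S}_B$ being a solution with support $\Lambda_B$ means the Bownik--Lawton iteration \eqref{eq:sequence} for $B$ and $\mathcal{S}_B$ converges to a scaling function $\varphi_B$ derived from $(\mathcal{S}_B,\mathcal{E}_{(\Lambda_B,B,s)})$, and the two conditions of Definition \ref{def:PHIISO} hold on the nose: (A) is exactly the hypothesis, and (B) is $b_{\theta(\vn)}=a_{\theta^{-1}(\theta(\vn))}=a_\vn$ for all $\vn\in\Lambda_A$. Therefore $\varphi_A\simeq\varphi_B$.

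I do not anticipate a genuine obstacle here. The one point that deserves care is to notice that Definition \ref{def:EQNISO} prescribes a \emph{direct} formal substitution rather than a substitution up to conjugation, so the conjugation alternative of Lemma \ref{lm:pmk} never intervenes; once that is observed, the term-by-term correspondence $h_\vn\overline{h_{\vn+\vk}}\leftrightarrow h'_{\theta(\vn)}\overline{h'_{\theta(\vn)+\vl}}$, and hence the transfer of the solution, is automatic, and the rest is bookkeeping with the bijections $\theta$ and $\eta$.
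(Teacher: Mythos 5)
Your proposal is correct and takes the route the paper intends: the paper states this lemma without proof, calling it ``immediate from Definitions \ref{def:EQNISO} and \ref{def:PHIISO},'' and your argument is precisely that direct unwinding (transfer of the normalization equation by bijectivity of $\theta$, term-by-term transfer of each $\vl$-equation via $\theta(\vn+\vk)=\theta(\vn)+\eta(\vk)$ and $\eta(\vec 0)=\vec 0$, then condition (B) of Definition \ref{def:PHIISO} holding by construction). No discrepancy with the paper's approach.
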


Let $\varphi_0$ be a scaling function derived from the solution $\mathcal{S}_0 = \{a_\n \mid \n\in \Lambda_0\}$ to $\mathcal{E}_{(\Lambda_0,A,d)}$
with index set  $\Lambda_0^E$.
Let $\vn_0 \in \Z^d$,
 $\mathcal{S}_1 \equiv \{b_\vm=a_{\vm-\vn_0}\mid \vm \in \Lambda_0 +\vn_0\}$ and $\Lambda_1 \equiv \Lambda_0 +\vn_0$.
Then the set $\mathcal{S}_1$ is a solution to $\mathcal{E}_{(\Lambda_1,A,d)}$
with the same index set $\Lambda_0^E$.
We call the new scaling function $\varphi_1$ generated by $\mathcal{S}_1$ \textit {the scaling function of $\varphi_0$ after shifting $\vn_0$}.
It is easy to verify that the mappings $\theta$ and $\eta$ defined by $\theta(\vn) \equiv \vn +\vn_0, \vn \in \Lambda_0$ and $\eta(\vk) \equiv \vk, \vk\in \Lambda_0^E$ are the bijections that satisfy the conditions in Proposition \ref{prop:iso},
hence $\mathcal{E}_{(\Lambda_0,A,d)} \sim \mathcal{E}_{(\Lambda_1,A,d)}$ and $\varphi_0 \simeq \varphi_1$.
So we have

\begin{proposition}\label{prop:shift}
The scaling function of $\varphi_0$ after shifting $\vn_0$ is algebraically isomorphic to $\varphi_0$.
And their reduced system of equations are isomorphic.
\end{proposition}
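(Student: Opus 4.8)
The plan is to exhibit the isomorphism explicitly and then invoke Proposition \ref{prop:iso}, rather than unwinding Definition \ref{def:EQNISO} by hand. I would take $\Theta\colon\Z^d\to\Z^d$ to be the translation $\Theta(\vx)\equiv\vx+\vn_0$, and let $\theta,\eta$ be the maps already named before the statement: $\theta(\vn)\equiv\vn+\vn_0$ for $\vn\in\Lambda_0$ and $\eta(\vk)\equiv\vk$ for $\vk\in\Lambda_0^E$. Both are bijections ($\theta$ onto $\Lambda_1=\Lambda_0+\vn_0$, and $\eta$ the identity on $\Lambda_0^E$), and $\Theta$ extends $\theta$ to $\Z^d$.

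First I would check that $\mathcal{S}_1$ really is a solution to $\mathcal{E}_{(\Lambda_1,A,d)}$ and that $\Lambda_0^E$ is again a legitimate index set for it. The point is that translation commutes with everything involved: for any $\vk\in A\Z^d$ one has $\Lambda_1\cap(\Lambda_1\pm\vk)=\big(\Lambda_0\cap(\Lambda_0\pm\vk)\big)+\vn_0$, so by Lemma \ref{lm:vklambda} a vector $\vk$ generates a non-trivial equation for $\Lambda_1$ exactly when it does so for $\Lambda_0$; the relation $\sim$ of Lemma \ref{lm:pmk} on $A\Z^d$ is then literally the same for the two supports, so $\P_{\Lambda_1}=\P_{\Lambda_0}$ and $\Lambda_0^E$ still satisfies (A)--(C) of Lemma \ref{lm:indexset}. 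Substituting $b_\vm=a_{\vm-\vn_0}$ into an equation of $\mathcal{E}_{(\Lambda_1,A,d)}$ and re-indexing by $\vn=\vm-\vn_0$ returns the corresponding equation of $\mathcal{E}_{(\Lambda_0,A,d)}$, which $\mathcal{S}_0$ satisfies; hence $\mathcal{S}_1$ is a solution.

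Next I would verify the single hypothesis of Proposition \ref{prop:iso}: for all $\vk\in\Lambda_0^E$ and $\vn\in\Lambda_0$,
\[
\Theta(\vn+\vk)=\vn+\vk+\vn_0=(\vn+\vn_0)+\vk=\theta(\vn)+\eta(\vk).
\]
Proposition \ref{prop:iso} then yields $\mathcal{E}_{(\Lambda_0,A,d)}\sim\mathcal{E}_{(\Lambda_1,A,d)}$, which is the second assertion. For the first assertion it remains to check condition (B) of Definition \ref{def:PHIISO}: by the definition of $\mathcal{S}_1$ we have $b_{\theta(\vn)}=b_{\vn+\vn_0}=a_{(\vn+\vn_0)-\vn_0}=a_\vn$ for every $\vn\in\Lambda_0$. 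Combined with the isomorphism just obtained, this is precisely $\varphi_0\simeq\varphi_1$.

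There is no genuinely hard step here — the argument is bookkeeping — but the part deserving care is the claim that the \emph{same} set $\Lambda_0^E$ serves as an index set on both sides, i.e.\ that the partition of $A\Z^d$ induced by $\Lambda_0$ is unchanged when $\Lambda_0$ is replaced by its translate $\Lambda_1$. This is exactly where the translation-invariance identity $\Lambda_1\cap(\Lambda_1\pm\vk)=\big(\Lambda_0\cap(\Lambda_0\pm\vk)\big)+\vn_0$ does the work. One could alternatively shortcut this verification by appealing to Lemma \ref{lm:phiiso} with the bijection $\theta$, but carrying it out directly keeps the proof self-contained and makes the equality of index sets transparent.
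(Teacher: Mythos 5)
Your proposal is correct and follows essentially the same route as the paper: the paper also takes $\theta(\vn)\equiv\vn+\vn_0$, $\eta(\vk)\equiv\vk$ and invokes Proposition \ref{prop:iso} via the identity $\Theta(\vn+\vk)=\theta(\vn)+\eta(\vk)$. Your explicit verification that $\Lambda_0^E$ remains an index set for the translated support (via $\Lambda_1\cap(\Lambda_1\pm\vk)=(\Lambda_0\cap(\Lambda_0\pm\vk))+\vn_0$) is a detail the paper leaves as ``easy to verify,'' so you have simply filled in what the paper omits.
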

Note that one can choose $\vn_0$ so that the new support contains $\vec{0}$.

\section{Matrix and Basis}\label{ss:base}

Let $d \geq 1$ be a natural number and $A$ a $d\times d$ expansive dyadic integral matrix.
We will need the following Smith Normal Form for integral matrices \cite{baeth}.
\begin{lemma}
For any expansive dyadic integral matrix $A$, we have $A = UDV$, where $U,V$ are integral matrices of determinant $\pm 1$, and $D$ a diagonal matrix with the last diagonal entry $2$ and all other diagonal entries $1$.
\end{lemma}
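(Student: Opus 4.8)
The plan is to deduce this from the classical Smith Normal Form theorem together with the single numerical input $|\det(A)| = 2$. First I would recall the general statement (this is the content of the result cited in \cite{baeth}): every $d \times d$ integral matrix $M$ can be written as $M = U D' V$, where $U, V$ are integral matrices of determinant $\pm 1$ and $D' = \mathrm{diag}(d_1, \dots, d_d)$ with the $d_i$ nonnegative integers satisfying the divisibility chain $d_1 \mid d_2 \mid \cdots \mid d_d$. I would note that only the hypotheses ``$A$ is integral'' and ``$A$ is dyadic'' are used below; expansiveness plays no role in this particular lemma.

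Next I would specialize to $A$. Since $A$ is dyadic we have $|\det(A)| = 2$, so in particular $A$ is nonsingular. Taking determinants in $A = U D' V$ and using $|\det(U)| = |\det(V)| = 1$ gives $\prod_{i=1}^{d} d_i = |\det(A)| = 2$. Because this product is nonzero, every $d_i \geq 1$; and because the $d_i$ are positive integers whose product is the prime $2$, exactly one of them equals $2$ and all the others equal $1$.

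Finally I would use the ordering to pin down which diagonal entry is $2$. The divisibility chain $d_1 \mid \cdots \mid d_d$ forces the entry equal to $2$ to be the last one: if $d_j = 2$ for some $j < d$, then $2 = d_j \mid d_d$ would give $d_d \geq 2$, contradicting $d_d = 1$. Hence $d_1 = \cdots = d_{d-1} = 1$ and $d_d = 2$, so $D := D'$ is exactly the asserted diagonal matrix and $A = UDV$.

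As for difficulty: there is essentially no obstacle here once Smith Normal Form is invoked; the only step that genuinely needs a line of argument is the placement of the entry $2$ in the last diagonal position, which is precisely where the ordering of the invariant factors is used. No sign adjustment of $U$ or $V$ is required, since under the standard normalization the invariant factors $d_i$ are already nonnegative.
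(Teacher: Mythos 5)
Your proof is correct and follows exactly the route the paper intends: the paper states this lemma as an instance of the Smith Normal Form and simply cites \cite{baeth} without giving any argument, so your derivation (product of invariant factors equals $|\det(A)|=2$, primality forces one factor to be $2$ and the rest $1$, and the divisibility chain $d_1\mid\cdots\mid d_d$ places the $2$ in the last slot) just supplies the routine details the paper omits. Your observation that expansiveness is not used here is also accurate.
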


Let $\ve_1,...,\ve_d$ be the standard basis for $\Z^d$. Note that $V\Z^d = \Z^d$ and $U\Z^d = \Z^d$.
\begin{align*}
 \Z^d & = span\{\ve_1,...,\ve_{d-1},2\ve_d \}~ \dotcup ~\big( span\{\ve_1,...,\ve_{d-1},2\ve_d \}+\ve_d \big)\\
       & = D\Z^d ~\dotcup~ (D\Z^d + \ve_d) \\
      & = DV\Z^d ~\dotcup~ (DV\Z^d + \ve_d), \\
\Z^d = U\Z^d & = UDV\Z^d ~\dotcup~ U(DV\Z^d + \ve_d).
\end{align*}
So we have
\begin{proposition}\label{prop:partition}
Let $d\geq1$ be a natural number and $A$ a $d\times d$ expansive dyadic integral matrix. Then
\begin{equation*}
    \Z^d= A\Z^d  ~\dotcup~ (A\Z^d + U\ve_d).
\end{equation*}
\end{proposition}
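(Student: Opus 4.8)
The plan is to exploit the Smith Normal Form decomposition $A = UDV$ supplied by the preceding lemma, together with two elementary facts: that the unimodular matrices $U$ and $V$ act as bijections of the lattice $\Z^d$ onto itself, and that the diagonal matrix $D$ carries $\Z^d$ onto the index-two sublattice consisting of the vectors whose last coordinate is even.

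First I would record that $V\Z^d = \Z^d$ and $U\Z^d = \Z^d$: an integral matrix of determinant $\pm 1$ has an integral inverse, so the induced map $\Z^d \to \Z^d$ is a bijection (indeed a group automorphism of $\Z^d$). Next I would identify $D\Z^d = \mathrm{span}_{\Z}\{\ve_1,\dots,\ve_{d-1},2\ve_d\}$, which is exactly the set of $\vec{x}\in\Z^d$ whose $d$-th coordinate is even. This sublattice has index two in $\Z^d$, with coset representatives $\0$ and $\ve_d$, so $\Z^d = D\Z^d \dotcup (D\Z^d + \ve_d)$, the displayed disjointness being automatic from the parity description.

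Then I would chain these facts together, exactly along the computation displayed just before the statement. From $V\Z^d = \Z^d$ we get $D\Z^d = DV\Z^d$, hence $\Z^d = DV\Z^d \dotcup (DV\Z^d + \ve_d)$. Applying the bijection $U$ to both sides, and using that a bijection sends a disjoint union to a disjoint union and satisfies $U(S + \ve_d) = US + U\ve_d$, we obtain $\Z^d = U\Z^d = UDV\Z^d \dotcup U(DV\Z^d + \ve_d) = A\Z^d \dotcup (A\Z^d + U\ve_d)$, which is the claimed partition.

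There is no genuine obstacle: each step is a one-line consequence of the chain of equalities preceding the statement. The only point that requires a moment's care is verifying that $D\Z^d$ really is the ``even last coordinate'' sublattice, so that it has index two with coset representatives $\0$ and $\ve_d$; granting that, disjointness of the two pieces on the right transfers through the bijection $U$ for free.
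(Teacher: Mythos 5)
Your proposal is correct and follows essentially the same route as the paper: it invokes the Smith Normal Form $A=UDV$, uses $V\Z^d=\Z^d$ and $U\Z^d=\Z^d$, identifies $D\Z^d$ as the index-two ``even last coordinate'' sublattice with coset representatives $\0$ and $\ve_d$, and pushes the resulting disjoint decomposition through the bijection $U$, which is exactly the chain of equalities the paper displays before the statement. The only difference is that you spell out the justifications (unimodularity gives a lattice automorphism; bijections preserve disjoint unions) that the paper leaves implicit.
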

Define $F = \{\vf_j = U \ve_j \mid j=1,...,d\}$.
This is a new basis for $\Z^d$ and matrix $U$ maps the standard basis to $F$.
We have
\begin{align*}
A\Z^d  & = UDV\Z^d  = UD\Z^d\\
       & = U~span\{\ve_1,...,\ve_{d-1},2\ve_d \} \\
       & =  span\{U\ve_1,...,U\ve_{d-1},2U\ve_d \}\\
       & = span \{ \vf_1,...,\vf_{d-1},2\vf_d \}.
\end{align*}
This proves
\begin{proposition}\label{prop:newbase}
Let $d\geq1$ be a natural number and $A$ a $d\times d$ expansive dyadic integral matrix. Then
$\R^d$ has a basis $\{\vf_j \mid j=1,...,d\}$ with properties that, under this new basis,
a vector $\vk$ is in $A\Z^d$ if and only if the last coordinate of $\vk$ is an even number.
That is, under this new basis, we have
\begin{align*}
A\Z^d = \{(\vx,2n) \mid \vx \in \Z^{d-1}, n\in\Z  \}.
\end{align*}
\end{proposition}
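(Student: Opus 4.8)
The plan is to extract the new basis directly from the Smith Normal Form $A = UDV$ of the preceding lemma, where $U,V$ are integral with $\det U = \det V = \pm 1$ and $D$ is diagonal with diagonal entries $1,\dots,1,2$. First I would set $\vf_j \equiv U\ve_j$ for $j=1,\dots,d$ and record that, since $\det U = \pm 1$, the matrix $U$ is invertible over $\Z$ and a fortiori over $\R$; consequently $F = \{\vf_1,\dots,\vf_d\}$ is simultaneously a $\Z$-basis of $\Z^d$ and an $\R$-basis of $\R^d$, and $U$ is precisely the change-of-basis matrix sending the standard basis to $F$.

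Next I would compute $A\Z^d$ in these terms. Since $\det V = \pm 1$ we have $V\Z^d = \Z^d$, so $A\Z^d = UDV\Z^d = UD\Z^d$. Because $D$ is diagonal with entries $1,\dots,1,2$, one has $D\Z^d = \mathrm{span}_{\Z}\{\ve_1,\dots,\ve_{d-1},2\ve_d\}$, i.e.\ exactly the integer vectors whose last standard coordinate is even. Applying the linear map $U$ and using $U\ve_j = \vf_j$ yields
\[
A\Z^d = \mathrm{span}_{\Z}\{\vf_1,\dots,\vf_{d-1},2\vf_d\}.
\]

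Finally I would rephrase this in $F$-coordinates. Writing an arbitrary $\vk \in \Z^d$ uniquely as $\sum_{j=1}^d c_j \vf_j$ with $c_j \in \Z$ — uniqueness being exactly what the $\Z$-basis property of $F$ provides — and noting that $\{\vf_1,\dots,\vf_{d-1},2\vf_d\}$ is itself a $\Z$-basis of the sublattice $A\Z^d$, one sees that $\vk \in A\Z^d$ if and only if $c_d$ is even. Denoting the $F$-coordinates of $\vk$ by $(\vx,n)$ with $\vx \in \Z^{d-1}$, $n \in \Z$, this says $\vk \in A\Z^d \iff n$ is even, so $A\Z^d = \{(\vx,2n)\mid \vx\in\Z^{d-1},\, n\in\Z\}$. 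I do not expect a genuine obstacle here; the only point deserving a word of care is that ``last coordinate'' in the statement refers to the new basis $F$, not the standard one, and that the even/odd dichotomy rests on the uniqueness of representations in the $\Z$-basis $F$ (equivalently, on $\det U = \pm 1$).
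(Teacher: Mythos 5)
Your proposal is correct and follows essentially the same route as the paper: both take the Smith Normal Form $A=UDV$, set $\vf_j = U\ve_j$, use $V\Z^d=\Z^d$ to get $A\Z^d = UD\Z^d = \mathrm{span}\{\vf_1,\dots,\vf_{d-1},2\vf_d\}$, and read off the parity condition on the last $F$-coordinate. Your added remarks on the uniqueness of $F$-coordinates (via $\det U=\pm1$) only make explicit what the paper leaves implicit.
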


\section{Proof of Theorem \ref{thm:newmain}}\label{ss:core}

In this section, $d > 1$ and $N \geq 1$ are fixed natural numbers.
Let $A$ be a $d\times d$ expansive dyadic integral matrix.
By Proposition \ref{prop:newbase}, we use a new basis for $\R^d$ such that $A\Z^d = \{(\vx,2n) \mid \vx \in \Z^{d-1}, n\in\Z  \}$.

Define
\begin{align}\label{eq:Lambda_dn}
\Lambda_{d, N} & \equiv [0,2^N)^d \cap \Z^d = \big\{(n_1,\cdots,n_d) \mid 0\leq n_1,\cdots,n_d \leq 2^N-1\big\}.
\end{align}
The set $\Lambda_{d, N}$ contains  $2^{dN}$ elements in $\Z^d$.
So $\mathcal{E}_{(\Lambda_{d, N},A,d)}$ is a well-defined element in $\mathfrak{E}$.

For vector $\vn = (n_1,n_2,\cdots, n_{d-1},n_d) \in \Z^d$, define the function $\sigma_{_{d,N}}: \Z^d \rightarrow \Z$ as :
\begin{equation}\label{eq:sigmadN}
\sigma_{_{d,N}}(\vn) = \sum_{j=1}^d n_j \cdot 4^{(j-1)N}.
\end{equation}

\begin{lemma}\label{lm:sigma_inj}
Let $\vn\in (-2^N,2^N)^d \cap \Z^d.$ Then
\begin{description}
  \item[(A)] $\sigma_{_{d,N}}$ is an injection from $(-2^N,2^N)^d\cap\Z^d$ into $\Z$.
  \item[(B)] $\sigma_{_{d,N}}(\vn) = 0$ if and only if $\vn = \vec{0}$;
  \item[(C)] $\sigma_{_{d,N}}(\vn) > 0$ if and only if the right most non-zero coordinate of $\vn$ is positive;
  \item[(D)] $\sigma_{_{d,N}}(\vn) < 0$ if and only if the right most non-zero coordinate of $\vn$ is negative;
\end{description}
\end{lemma}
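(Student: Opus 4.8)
The plan is to read $\sigma_{_{d,N}}$ as a signed radix-$4^N$ expansion: the coordinate $n_j$ plays the role of the $j$-th digit in base $4^N$, and the hypothesis $\vn\in(-2^N,2^N)^d$ keeps every digit bounded in absolute value by $2^N-1$, which sits well below half of the base $4^N$. Under such a bound no ``carrying'' between blocks can occur, so the top nonzero block governs both the sign and the nonvanishing of the sum. I would isolate this as a single claim and then read off (A)--(D) from it.

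\emph{Claim.} If $\vec{p}=(p_1,\dots,p_d)\in\Z^d$ satisfies $|p_j|\le 2^{N+1}-2$ for every $j$, and $j_0$ is the largest index with $p_{j_0}\neq 0$, then $\sigma_{_{d,N}}(\vec{p})$ has the same sign as $p_{j_0}$; in particular $\sigma_{_{d,N}}(\vec{p})\neq 0$ whenever $\vec{p}\neq\vec{0}$.

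To prove the claim I would first use $\sigma_{_{d,N}}(-\vec{p})=-\sigma_{_{d,N}}(\vec{p})$ to reduce to the case $p_{j_0}>0$. Then, since $p_j=0$ for $j>j_0$ and $\sum_{j=1}^{j_0-1}4^{(j-1)N}=\frac{4^{(j_0-1)N}-1}{4^N-1}$,
\[
\sigma_{_{d,N}}(\vec{p})=\sum_{j=1}^{j_0}p_j\,4^{(j-1)N}\ \ge\ 4^{(j_0-1)N}-(2^{N+1}-2)\,\frac{4^{(j_0-1)N}-1}{4^N-1}.
\]
Since $4^N=2^{2N}\ge 2^{N+1}$ for $N\ge1$, we have $2^{N+1}-2\le 4^N-1$, hence $(2^{N+1}-2)\frac{4^{(j_0-1)N}-1}{4^N-1}\le 4^{(j_0-1)N}-1<4^{(j_0-1)N}$, and therefore $\sigma_{_{d,N}}(\vec{p})>0$, as wanted.

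With the claim in hand the four parts are immediate. Every $\vn\in(-2^N,2^N)^d\cap\Z^d$ has $|n_j|\le 2^N-1\le 2^{N+1}-2$, so the claim applies to $\vn$ itself and yields (C) and (D); part (B) is the case $\vn=\vec{0}$ together with the observation that a nonzero $\vn$ has a rightmost nonzero coordinate and hence nonzero image. For (A), if $\sigma_{_{d,N}}(\vn)=\sigma_{_{d,N}}(\vm)$ with $\vn,\vm$ in the box, then $\vec{p}\equiv\vn-\vm$ satisfies $|p_j|\le 2(2^N-1)=2^{N+1}-2$ and $\sigma_{_{d,N}}(\vec{p})=\sigma_{_{d,N}}(\vn)-\sigma_{_{d,N}}(\vm)=0$, so the contrapositive of the claim forces $\vec{p}=\vec{0}$, i.e.\ $\vn=\vm$. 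The only place that needs any care is the geometric-series estimate and the threshold inequality $2^{N+1}-1\le 4^N$; I do not expect a genuine obstacle, since the statement amounts to uniqueness of a balanced mixed-radix representation and the box $(-2^N,2^N)^d$ was chosen precisely so that the digit bound survives taking differences.
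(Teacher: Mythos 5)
Your proposal is correct and follows essentially the same route as the paper: both arguments rest on the dominance of the top nonzero digit over the lower ones via the geometric-series bound $\sum_{j=1}^{j_0-1}4^{(j-1)N}=\frac{4^{(j_0-1)N}-1}{4^N-1}$ with the digit bound $2^{N+1}-2$. The only difference is organizational --- you fold the injectivity estimate (on differences) and the sign estimate (on single points) into one claim, whereas the paper carries out the two estimates separately --- and this changes nothing of substance.
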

\begin{proof}
Let $\va,\vb$ be distinct elements in $(-2^N,2^N)^d\cap\Z^d$ and $j_0$ be the largest index where $a_{j_0} \neq b_{j_0}$.
Without loss of generality, we assume $a_{j_0}-b_{j_0} \geq 1$.
By definition of $\sigma_{_{d,N}}$ in \eqref{eq:sigmadN}, we have
\begin{align*}
\sigma_{_{d,N}}(\va) - \sigma_{_{d,N}}(\vb) 
& = (a_{j_0}-b_{j_0}) \cdot 4^{(j_0-1)N} + \sum_{j=1}^{j_0 - 1} (a_j - b_j) \cdot 4^{(j-1)N} \\
& \geq 4^{(j_0-1)N} - \sum_{j=1}^{j_0 - 1} |a_j - b_j| \cdot 4^{(j-1)N} \\
& \geq 4^{(j_0-1)N} - \sum_{j=1}^{j_0 - 1} (2^{N+1}-2) \cdot 4^{(j-1)N} \\
& =   4^{(j_0-1)N}(1 - \frac{2}{2^N+1}) + \frac{2}{2^N+1} > 0.
\end{align*}
Hence (A) is proved and it follows that (B) is also true.

Next, let $\va = (a_1, \cdots, a_d) \in(-2^N,2^N)^d\cap\Z^d$ and $j_0$ be the largest index where $a_j \neq 0$.
By definition of $\sigma_{_{d,N}}$, we have
\begin{align*}
\sigma_{_{d,N}}(\va) - a_{j_0}\cdot4^{(j_0-1)N}  = \sum_{j=1}^{j_0 - 1} a_j \cdot 4^{(j-1)N}.
\end{align*}
So
\[-\sum_{j=1}^{j_0 - 1} (2^N-1) \cdot 4^{(j-1)N}  \leq \sigma_{_{d,N}}(\va)  - a_{j_0}\cdot4^{(j_0-1)N} \leq \sum_{j=1}^{j_0 - 1} (2^N-1) \cdot 4^{(j-1)N}.\]
\[- \frac{4^{(j_0-1)N}-1}{2^N+1}  \leq \sigma_{_{d,N}}(\va)  - a_{j_0}\cdot4^{(j_0-1)N} \leq  \frac{4^{(j_0-1)N}-1}{2^N+1}.\]
\[ a_{j_0}\cdot4^{(j_0-1)N}  - \frac{4^{(j_0-1)N}-1}{2^N+1}  \leq \sigma_{_{d,N}}(\va) \leq  a_{j_0}\cdot4^{(j_0-1)N}  + \frac{4^{(j_0-1)N}-1}{2^N+1}.\]
It is clear that $\sigma_{_{d,N}}(\va)$ and $a_{j_0}$ have the same sign.
Hence (C) and (D) are proved.
\end{proof}

Define $f_{d,N}: \Z^d \rightarrow \Z$:
\begin{equation}\label{eq:fdN}
f_{d,N}(\vx,y) \equiv \big\lfloor\frac{y}{2}\big\rfloor 2^{(2d-3)N+2} + \left\{\begin{array}{ll}
2\sigma_{_{d-1,N}}(\vx)&\text{$y$ even}\\
2\sigma_{_{d-1,N}}(\vx)+1&\text{$y$ odd}\\
\end{array}\right.
~\forall \vx \in \Z^{d-1}, y \in \Z
\end{equation}
where $\lfloor\frac{y}{2}\rfloor$ gives the greatest integer that is less than or equal to $\frac{y}{2}$.
It is clear that $f_{d,N}$ has the following properties:
\begin{lemma}\label{lm:propf}
For $\vx, \vz\in \Z^{d-1}$ and $y,j \in \Z$,
\begin{description}
\item[(A)] $f_{d,N}(\vx+\vz,y) - f_{d,N}(\vx,y)  = 2\sigma_{_{d-1,N}}(\vz)$;
\item[(B)] $f_{d,N}(\vx,y+2j) - f_{d,N}(\vx,y)  = j\cdot2^{(2d-3)N+2}$;
\item[(C)] $f_{d,N}(\vx,y+1) - f_{d,N}(\vx,y)  = \left\{\begin{array}{ll}
                                              1 & \text{~y even;} \\
                                              2^{(2d-3)N+2}-1 & \text{~y odd.} \end{array}\right.$
\end{description}

\end{lemma}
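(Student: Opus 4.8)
The plan is to verify all three identities by direct substitution into the defining formula \eqref{eq:fdN}, using the single structural observation that the first summand $\lfloor y/2\rfloor\,2^{(2d-3)N+2}$ depends only on $y$, while the second summand depends on $\vx$ only through $\sigma_{_{d-1,N}}(\vx)$ and on $y$ only through its parity.

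First I would dispose of part (A). Translating $\vx$ to $\vx+\vz$ changes neither $\lfloor y/2\rfloor$ nor the parity of $y$, so those contributions cancel in the difference $f_{d,N}(\vx+\vz,y)-f_{d,N}(\vx,y)$, leaving $2\sigma_{_{d-1,N}}(\vx+\vz)-2\sigma_{_{d-1,N}}(\vx)$. From the explicit formula \eqref{eq:sigmadN}, $\sigma_{_{d-1,N}}$ is a $\Z$-linear functional on $\Z^{d-1}$, hence $\sigma_{_{d-1,N}}(\vx+\vz)=\sigma_{_{d-1,N}}(\vx)+\sigma_{_{d-1,N}}(\vz)$, and (A) follows.

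Next, for part (B), I would note that $y$ and $y+2j$ have the same parity, so both the parity bit and the term $2\sigma_{_{d-1,N}}(\vx)$ cancel, and the residual difference is $\big(\lfloor (y+2j)/2\rfloor-\lfloor y/2\rfloor\big)\,2^{(2d-3)N+2}$. Since $\lfloor (y+2j)/2\rfloor=\lfloor y/2\rfloor+j$ for all integers $y,j$, this equals $j\cdot2^{(2d-3)N+2}$. Finally, for part (C) I would split on the parity of $y$: if $y$ is even then $\lfloor (y+1)/2\rfloor=\lfloor y/2\rfloor$, so the floor term contributes nothing while the parity bit moves from $0$ to $1$, giving net change $+1$; if $y$ is odd then $\lfloor (y+1)/2\rfloor=\lfloor y/2\rfloor+1$, so the floor term contributes $2^{(2d-3)N+2}$ while the parity bit moves from $1$ to $0$, giving net change $2^{(2d-3)N+2}-1$.

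I do not expect any real obstacle: all three parts reduce to the $\Z$-linearity of $\sigma_{_{d-1,N}}$, which is immediate from \eqref{eq:sigmadN}, together with the elementary floor identities $\lfloor (y+2j)/2\rfloor=\lfloor y/2\rfloor+j$ and $\lfloor (y+1)/2\rfloor-\lfloor y/2\rfloor\in\{0,1\}$ according to the parity of $y$. The only point requiring a little care is bookkeeping the parity bit consistently in part (C), where $y$ and $y+1$ have opposite parities, so that the $+1$ and the $2^{(2d-3)N+2}$ are combined with the correct signs.
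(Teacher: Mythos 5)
Your proof is correct: all three identities follow exactly as you compute them, from the linearity of $\sigma_{_{d-1,N}}$ and the elementary floor/parity bookkeeping. The paper states this lemma without proof (``It is clear that\dots''), and your direct verification from the definition \eqref{eq:fdN} is precisely the routine argument the authors intend.
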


Define
\begin{align}\label{eq:EdN}
E_{d,N} \equiv \{\vn=(\vx, 2j) \in \Z^d \mid \sigma_{_{d,N}}(\vn) \geq 0; \vn\in(-2^N,2^N)^d\cap\Z^d\}.
\end{align}

\begin{lemma}
$f_{d,N}$ defined in \eqref{eq:fdN} is an injection from $(-2^N,2^N)^d\cap\Z^d$ into $\Z$.
\end{lemma}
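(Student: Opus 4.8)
The plan is to read $f_{d,N}(\vx,y)$ as a mixed-radix encoding of three data --- the quotient $\lfloor y/2\rfloor$, the parity $\epsilon(y)\in\{0,1\}$ of $y$ (with $\epsilon(y)=0$ for $y$ even and $\epsilon(y)=1$ for $y$ odd), and $\sigma_{_{d-1,N}}(\vx)$ --- and to recover each of them from the single integer value $f_{d,N}(\vx,y)$. Writing $M\equiv 2^{(2d-3)N+2}$, the defining formula \eqref{eq:fdN} reads $f_{d,N}(\vx,y)=\lfloor y/2\rfloor\,M+2\sigma_{_{d-1,N}}(\vx)+\epsilon(y)$, so the whole point will be that on the cube $(-2^N,2^N)^{d-1}\cap\Z^{d-1}$ the ``low block'' $2\sigma_{_{d-1,N}}(\vx)$ is too small to reach into the ``high block'' carrying $\lfloor y/2\rfloor$.

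First I would extract the parity: since $M$ is even and $2\sigma_{_{d-1,N}}(\vx)$ is even, $f_{d,N}(\vx,y)\equiv\epsilon(y)\pmod 2$, so $f_{d,N}(\vx,y)=f_{d,N}(\vx',y')$ forces $\epsilon(y)=\epsilon(y')$; after subtracting $\epsilon$ it remains to handle $g\equiv\lfloor y/2\rfloor\,M+2\sigma_{_{d-1,N}}(\vx)$. Next comes the one genuinely quantitative step: bounding $|\sigma_{_{d-1,N}}(\vx)|$ on the cube by the geometric sum $(2^N-1)\sum_{j=1}^{d-1}4^{(j-1)N}=\frac{4^{(d-1)N}-1}{2^N+1}<\frac{4^{(d-1)N}}{2^N}=2^{(2d-3)N}=M/4$, so that $r\equiv 2\sigma_{_{d-1,N}}(\vx)$ satisfies $|r|<M/2$. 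Then $\lfloor y/2\rfloor$ is the unique integer $q$ with $|g-qM|<M/2$, hence is determined by $g$; together with $\epsilon(y)$ this pins down $y$. Subtracting the $M$-block now leaves $r$, so $\sigma_{_{d-1,N}}(\vx)=\sigma_{_{d-1,N}}(\vx')$, and Lemma \ref{lm:sigma_inj}(A) applied with $d$ replaced by $d-1$ gives $\vx=\vx'$. Thus $(\vx,y)=(\vx',y')$ and $f_{d,N}$ is injective on $(-2^N,2^N)^d\cap\Z^d$.

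The main obstacle --- really the only non-bookkeeping point --- is the digit-separation estimate $2|\sigma_{_{d-1,N}}(\vx)|<M/2$: the exponent $(2d-3)N+2$ in \eqref{eq:fdN} is chosen precisely so that this holds on the cube, which is what decouples the $\lfloor y/2\rfloor$-block from the $\sigma_{_{d-1,N}}$-block; everything else is routine. (Alternatively one can read off the same block structure directly from the increment formulas in Lemma \ref{lm:propf}, but the division-with-remainder argument above seems shortest.)
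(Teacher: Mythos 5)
Your proof is correct, and it is organized differently from the paper's. The paper fixes two distinct points, writes the second as $(\vx+\vz,y+w)$ with $w\ge 0$, computes the difference $\Delta=f_{d,N}(\vx+\vz,y+w)-f_{d,N}(\vx,y)$ via the increment formulas of Lemma \ref{lm:propf}, and runs a four-way case analysis ($w=0$; $w=2j$, $j\ge 1$; $w=2j+1$ with $j=0$ or $j\ge 1$) to show $\Delta\neq 0$, using $2|\sigma_{_{d-1,N}}(\vz)|<2^{(2d-3)N+2}$ for the difference vector $\vz$ and parity of $\Delta$ in the $w$-odd, $j=0$ case. You instead decode the value: parity mod $2$ recovers $\epsilon(y)$, balanced division by $M=2^{(2d-3)N+2}$ recovers $\lfloor y/2\rfloor$ (legitimate because $|2\sigma_{_{d-1,N}}(\vx)|<M/2$ on the cube, and your geometric-sum bound $\frac{4^{(d-1)N}-1}{2^N+1}<M/4$ is right), and the remainder recovers $\sigma_{_{d-1,N}}(\vx)$, after which Lemma \ref{lm:sigma_inj}(A) in dimension $d-1$ finishes. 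Both arguments hinge on exactly the same digit-separation estimate built into the exponent $(2d-3)N+2$, so the mathematical content is shared; what your version buys is the elimination of the casework and, as a small bonus, a cleaner use of Lemma \ref{lm:sigma_inj}: you apply its injectivity directly to the two points $\vx,\vx'$ inside $(-2^N,2^N)^{d-1}$, whereas the paper applies part (B) to the difference $\vz$, whose coordinates can reach $2^{N+1}-2$ and so technically sit outside the lemma's stated domain (the lemma's own proof does cover that range, but your route avoids the issue entirely).
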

\begin{proof}
Let $(\vx+\vz,y+w),(\vx,y)$ be two distinct elements in $(-2^N,2^N)^d\cap\Z^d$.
Without loss of generality, we can further assume $w \geq 0$.
Denote $\Delta = f_{d,N}(\vx+\vz,y+w) - f_{d,N}(\vx,y)$.
We only need to show that $\Delta \neq 0$ when $\vz \neq \vec{0}$ or $w > 0$.
By Lemma \ref{lm:propf}, we have
\begin{align*}
\Delta & = 2\sigma_{_{d-1,N}}(\vz) + f_{d,N}(\vx,y+w) - f_{d,N}(\vx,y)\\
& = 2\sigma_{_{d-1,N}}(\vz) + \left\{\begin{array}{ll}
              j\cdot2^{(2d-3)N+2} & ~w=2j \\
              j\cdot2^{(2d-3)N+2} + f_{d,N}(\vx,y+1) - f_{d,N}(\vx,y)  & ~w=2j+1 \end{array}\right.
\end{align*}

When $w$ is even, we have the following two cases:

(A) $w=2j, j\geq1$. Notice that $2^{(2d-3)N+2} > 2\sigma_{_{d-1,N}}(\vz)$,
    so $\Delta = 2\sigma_{_{d-1,N}}(\vz) + j\cdot2^{(2d-3)N+2} > 0$.

(B) $w=0,\vz \neq \vec{0}$. $\Delta = 2\sigma_{_{d-1,N}}(\vz) \neq 0$, by Lemma \ref{lm:sigma_inj} (B).

When $w$ is odd, we have:
\begin{align*}
\Delta & = 2\sigma_{_{d-1,N}}(\vz) +j\cdot2^{(2d-3)N+2} + f_{d,N}(\vx,y+1) - f_{d,N}(\vx,y) \\
& = 2\sigma_{_{d-1,N}}(\vz) +j\cdot2^{(2d-3)N+2} + \left\{\begin{array}{ll}
                                              1 & y \text{ even} \\
                                              2^{(2d-3)N+2}-1 & y \text{ odd} \end{array}\right.
\end{align*}
So we have the following two cases:

(C) $w=2j+1, j=0$. Then
$\Delta = 2\sigma_{_{d-1,N}}(\vz) + \left\{\begin{array}{ll}
                                              1 & y \text{ even} \\
                                              2^{(2d-3)N+2}-1 & y \text{ odd} \end{array}\right.$
is an odd number, thus not zero for either case of $y$.

(D) $w=2j+1, j\geq1$. Then $\Delta > 2\sigma_{_{d-1,N}}(\vz) + j\cdot2^{(2d-3)N+2} >0$.

\end{proof}

$\Lambda_{d,N}$, $E_{d,N}$ are subsets of $(-2^N,2^N)^d\cap\Z^d$. So we have
\begin{lemma}\label{lm:fdN_inj}
$f_{d,N}$ defined in \eqref{eq:fdN} is an injection from $\Lambda_{d,N}$ into $\Z$ and is also an injection from $E_{d,N}$ into $\Z$.
\end{lemma}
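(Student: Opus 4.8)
The plan is to reduce this statement immediately to the injectivity result for the larger set $(-2^N,2^N)^d\cap\Z^d$ established in the preceding lemma. First I would check the two containments. The set $\Lambda_{d,N}=[0,2^N)^d\cap\Z^d$ defined in \eqref{eq:Lambda_dn} is contained in $(-2^N,2^N)^d\cap\Z^d$, since every coordinate of a point of $\Lambda_{d,N}$ lies in $\{0,1,\dots,2^N-1\}\subseteq(-2^N,2^N)$. Likewise, $E_{d,N}$ is, by its very definition in \eqref{eq:EdN}, a subset of $(-2^N,2^N)^d\cap\Z^d$. Hence both sets on which injectivity is claimed sit inside the set on which injectivity of $f_{d,N}$ has already been proved.

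Second, I would invoke the elementary fact that the restriction of an injective map to any subset of its domain is again injective: if $f_{d,N}$ is one-to-one on $(-2^N,2^N)^d\cap\Z^d$ and $S$ is any subset of that set, then two distinct points of $S$ are in particular two distinct points of $(-2^N,2^N)^d\cap\Z^d$, and therefore have distinct images under $f_{d,N}$. Applying this with $S=\Lambda_{d,N}$ and then with $S=E_{d,N}$ yields the two assertions. There is no genuine obstacle in this lemma; the only point to verify is the pair of containments above, and both are read off directly from the definitions \eqref{eq:Lambda_dn} and \eqref{eq:EdN}. The substantive work — showing that $\Delta=f_{d,N}(\vx+\vz,y+w)-f_{d,N}(\vx,y)\neq 0$ for distinct arguments in the box — was already carried out in the previous lemma using Lemma \ref{lm:propf} and Lemma \ref{lm:sigma_inj}, so nothing further is needed here.
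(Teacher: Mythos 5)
Your proof is correct and matches the paper exactly: the paper also deduces Lemma \ref{lm:fdN_inj} immediately from the preceding lemma by observing that $\Lambda_{d,N}$ and $E_{d,N}$ are subsets of $(-2^N,2^N)^d\cap\Z^d$, so the restriction of the injection $f_{d,N}$ to each of them remains injective. Nothing further is needed.
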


\begin{proposition}\label{prop:lambda_dE}
The set $E_{d,N}$ defined in \eqref{eq:EdN} is an index set for $\mathcal{E}_{(\Lambda_{d, N},A,d)}$.
\end{proposition}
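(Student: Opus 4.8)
The plan is to apply the characterization of index sets in Lemma \ref{lm:indexset} and check its three conditions (A)--(C) for $E_{d,N}$. As a preliminary remark, every element of $E_{d,N}$ is written $(\vx,2j)$, so it has even last coordinate; by Proposition \ref{prop:newbase} this gives $E_{d,N}\subseteq A\Z^d$, as required of any index set.

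The first step is to identify which vectors generate equations in $\mathcal{E}_{(\Lambda_{d, N},A,d)}$. By Lemma \ref{lm:vklambda}, $\vk\in A\Z^d$ generates a (non-trivial) equation iff $\Lambda_{d,N}\cap(\Lambda_{d,N}-\vk)\neq\emptyset$; since $\Lambda_{d,N}=\{0,1,\dots,2^N-1\}^d$, this is equivalent to $\vk\in\Lambda_{d,N}-\Lambda_{d,N}=(-2^N,2^N)^d\cap\Z^d$. Combined with Proposition \ref{prop:newbase}, the set of generators of $\mathcal{E}_{(\Lambda_{d, N},A,d)}$ is exactly $S:=\{(\vx,2j)\in\Z^d\mid (\vx,2j)\in(-2^N,2^N)^d\cap\Z^d\}$. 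Since $E_{d,N}\subseteq S$ by \eqref{eq:EdN}, condition (A) is immediate (in particular $\vec{0}\in E_{d,N}$, and it generates $\sum_{\vn\in\Lambda_{d,N}}|h_{\vn}|^2=1$).

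For conditions (B) and (C) I would use that $\sigma_{_{d,N}}$ is $\Z$-linear, so $\sigma_{_{d,N}}(-\vk)=-\sigma_{_{d,N}}(\vk)$, together with Lemma \ref{lm:pmk} (in an index set two distinct generators may not be $\pm$ one another, and each pair $\{\vk,-\vk\}$ yields a single equation) and Lemma \ref{lm:sigma_inj}(B) ($\sigma_{_{d,N}}(\vk)=0\iff\vk=\vec{0}$). For (B): if distinct $\vk_1,\vk_2\in E_{d,N}$ generated the same equation, then $\vk_1=-\vk_2$ by Lemma \ref{lm:pmk}, so $0\leq\sigma_{_{d,N}}(\vk_1)=-\sigma_{_{d,N}}(\vk_2)\leq 0$, forcing $\sigma_{_{d,N}}(\vk_1)=0$ and hence $\vk_1=\vec{0}=\vk_2$, a contradiction. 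For (C): the equation $\sum_{\vn}|h_{\vn}|^2=1$ has generator $\vec{0}\in E_{d,N}$, while any non-trivial equation has a generator $\vk\in S\setminus\{\vec{0}\}$; then $-\vk\in S$ as well ($(-2^N,2^N)^d$ is symmetric under negation and $-\vk$ still has even last coordinate), and by Lemma \ref{lm:sigma_inj}(B) $\sigma_{_{d,N}}(\vk)\neq0$, so exactly one of $\vk,-\vk$ has nonnegative $\sigma_{_{d,N}}$ and therefore lies in $E_{d,N}$; by Lemma \ref{lm:pmk} that element generates the given equation. With (A)--(C) in hand, Lemma \ref{lm:indexset} yields that $E_{d,N}$ is an index set for $\mathcal{E}_{(\Lambda_{d, N},A,d)}$.

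I expect no real obstacle here: the argument is bookkeeping on top of lemmas already established. The only genuine computation is the identity $\Lambda_{d,N}-\Lambda_{d,N}=(-2^N,2^N)^d\cap\Z^d$, and the one idea that does the work is that $\sigma_{_{d,N}}$ is a linear functional separating $\vk$ from $-\vk$ for every nonzero $\vk$ in the box $(-2^N,2^N)^d$ — which is precisely why $E_{d,N}$, defined via the sign of $\sigma_{_{d,N}}$, selects a canonical representative from each pair $\{\vk,-\vk\}$. (The radix $4^{(j-1)N}$ in $\sigma_{_{d,N}}$ was chosen so that Lemma \ref{lm:sigma_inj} holds on exactly that box.)
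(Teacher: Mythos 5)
Your proof is correct and follows essentially the same route as the paper: both verify the three conditions of Lemma \ref{lm:indexset} using Lemma \ref{lm:vklambda}, Lemma \ref{lm:pmk}, and the sign behaviour of $\sigma_{_{d,N}}$ from Lemma \ref{lm:sigma_inj}. The only cosmetic difference is that you establish condition (A) via the difference-set identity $\Lambda_{d,N}-\Lambda_{d,N}=(-2^N,2^N)^d\cap\Z^d$, whereas the paper exhibits an explicit witness $\vn_0$ with $\vn_0,\vn_0+\vk_0\in\Lambda_{d,N}$; both are fine.
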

We define $\Lambda_{d,N}^E \equiv E_{d,N}$.
\begin{proof} We verify that this set $E_{d,N}$
satisfies the conditions in Lemma \ref{lm:indexset}. Let $\vk_0=(k_1, ..., k_{d-1},k_d)$ be a vector in  $E_{d,N}$.

(A) Define a vector $\vn_0=(n_1,...,n_d)$ by
$n_j=-k_j$ when $k_j \leq 0$ and $n_j=0$ when $k_j > 0$.
Then both $\vn_0, \vn_0+\vk_0 \in \Lambda_{d,N}$.
So each element in $E_{d,N}$ generates an equation in $\mathcal{E}_{(\Lambda_{d, N},A,d)}$ by Lemma \ref{lm:vklambda}.

(B) If $\vk_0=\vec{0}$, this vector generates the equation $\sum_{\vn \in \Lambda_{d, N}} |h_{\vn}|^2 =1$.
If  $\vk_0\neq\vec{0}$,
by Lemma \ref{lm:pmk} the only other vector generates the same equation as $\vk_0$ does is $-\vk_0$. This vector is not in $E_{d,N}$ since its right most coordinate is a negative number (Lemma \ref{lm:sigma_inj}).

(C) If $\vk$ is not in $(-2^N,2^N)^d\cap\Z^d$, then one of its coordinate's absolute value is greater than or equal to $2^N$.
For all $\vn\in \Lambda_{d,N}$, $\vn+\vk \notin \Lambda_{d,N}$,
so every $\vk$ that generates an equation in $\mathcal{E}_{(\Lambda_{d, N},A,d)}$ must be in $(-2^N,2^N)^d\cap\Z^d$, by Lemma \ref{lm:vklambda}.
By Lemma \ref{lm:pmk}, $\pm\vk$ will generate the same equation.
However, one of $\vk$ and $-\vk$ must be in $E_{d,N}$ by its definition. So $E_{d,N}$ generates all equations.

\end{proof}

Define mappings $\theta_{d,N}$ and $\eta_{d,N}$ as follows:
\begin{align}\label{eq:theta}
\theta_{d,N}\big((\vx,y)\big) & \equiv f_{d,N}(\vx,y), ~(\vx,y) \in \Lambda_{d,N} \\
\label{eq:eta}
\eta_{d,N}\big((\vx,y)\big) & \equiv f_{d,N}(\vx,y), ~(\vx,y) \in \Lambda_{d,N}^E.
\end{align}
By Lemma \ref{lm:fdN_inj}, $\theta_{d,N},\eta_{d,N}$ are injections on $\Lambda_{d,N}$ and $\Lambda_{d,N}^E$ respectively.

\begin{remark}\label{rm:remark}
Given a $d \times d$ expansive dyadic integral matrix $A$,
the new basis $F = \{\vf_j \mid j=1,...,d\}$ as discussed in Section \ref{ss:base} Proposition \ref{prop:newbase} is not unique in general.
The related sets $\Lambda_{d,N},\Lambda_{d,N}^E$, the mappings $\theta_{d,N},\eta_{d,N}$ and then the images $\theta_{d,N}(\Lambda_{d,N}), \eta_{d,N}(\Lambda_{d,N}^E)$
are all well-defined and are related to the matrix $A$ as well as the new basis.
Thus different basis gives different sets, mappings and images.
However, for the sake of simplicity of notation system, we will use the above notations without mentioning $A$ or the new basis $F$.
\end{remark}

\begin{lemma}\label{lm:add}
Let $\vk\in\Lambda_{d,N} ^E$ and $\vn\in\Lambda_{d,N}.$ Then
\begin{equation*}
    f_{d,N} (\vn+\vk) = \theta_{d,N}(\vn) + \eta_{d,N} (\vk).
\end{equation*}
\end{lemma}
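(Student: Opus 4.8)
The plan is to reduce the statement to the purely algebraic identity
\[
 f_{d,N}(\vn+\vk) = f_{d,N}(\vn) + f_{d,N}(\vk),
\]
because by the definitions \eqref{eq:theta} and \eqref{eq:eta} the maps $\theta_{d,N}$ and $\eta_{d,N}$ are nothing but the restrictions of $f_{d,N}$ to $\Lambda_{d,N}$ and to $\Lambda_{d,N}^E = E_{d,N}$, so the claimed equation is exactly $f_{d,N}(\vn+\vk)=f_{d,N}(\vn)+f_{d,N}(\vk)$. The two facts I will use are: (i) every $\vk\in E_{d,N}$ has even last coordinate, which is built into the definition \eqref{eq:EdN} (its elements are written $(\vx,2j)$, and recall $E_{d,N}\subseteq A\Z^d$ by Proposition \ref{prop:newbase}); and (ii) $\sigma_{_{d-1,N}}$ is additive on $\Z^{d-1}$, which is immediate from the formula \eqref{eq:sigmadN}.

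First I would write $\vn=(\vx,y)$ with $\vx\in\Z^{d-1}$, $y\in\Z$, and $\vk=(\vz,2m)$ with $\vz\in\Z^{d-1}$, $m\in\Z$ (using (i)); then $\vn+\vk=(\vx+\vz,\,y+2m)$. Next I would peel off the last coordinate with Lemma \ref{lm:propf}: part (B) gives $f_{d,N}(\vx+\vz,y+2m)=f_{d,N}(\vx+\vz,y)+m\cdot 2^{(2d-3)N+2}$, and part (A) gives $f_{d,N}(\vx+\vz,y)=f_{d,N}(\vx,y)+2\sigma_{_{d-1,N}}(\vz)$, so that $f_{d,N}(\vn+\vk)=f_{d,N}(\vx,y)+2\sigma_{_{d-1,N}}(\vz)+m\cdot 2^{(2d-3)N+2}$. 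On the other side, reading \eqref{eq:fdN} directly on $\vk=(\vz,2m)$ (the ``$y$ even'' branch) gives $f_{d,N}(\vk)=m\cdot 2^{(2d-3)N+2}+2\sigma_{_{d-1,N}}(\vz)$, while $f_{d,N}(\vn)=f_{d,N}(\vx,y)$; adding these and comparing term by term with the previous expression closes the argument, since $\theta_{d,N}(\vn)=f_{d,N}(\vn)$ and $\eta_{d,N}(\vk)=f_{d,N}(\vk)$.

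I do not expect a genuine obstacle here; the only subtle point to keep in view is that the evenness of the last coordinate of $\vk$ is used essentially. If that coordinate were odd, the case split in \eqref{eq:fdN} between the two parity branches would make $f_{d,N}$ fail to be additive (compare the asymmetric increment in Lemma \ref{lm:propf}(C)), so it is precisely because $E_{d,N}\subseteq A\Z^d$ — which Proposition \ref{prop:newbase} guarantees after passing to the new basis $F$ — that the shift by $\vk$ preserves the parity of $y$ and $f_{d,N}$ behaves linearly along it.
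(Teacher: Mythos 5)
Your proposal is correct and is essentially the paper's own argument: both proofs come down to expanding $f_{d,N}$ on $\vn=(\vx,y)$ and $\vk=(\vz,2m)$, using that the last coordinate of any $\vk\in\Lambda_{d,N}^E=E_{d,N}$ is even (so $\lfloor (y+2m)/2\rfloor=\lfloor y/2\rfloor+m$ and the parity branch of \eqref{eq:fdN} is unchanged) together with the additivity of $\sigma_{_{d-1,N}}$. The only cosmetic difference is that you route the computation through Lemma \ref{lm:propf}(A)--(B) while the paper expands $\theta_{d,N}(\vn)+\eta_{d,N}(\vk)$ directly from the definition and recombines the terms.
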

\begin{proof}
Denote $\vn = (\vu,n_d), \vk = (\vv,k_d)$ where $\vu,\vv \in \Z^{d-1}$ and $k_d\in2\Z$.
\begin{align*}
\eta_{d,N}(\vk) + \theta_{d,N}(\vn)  = & \left\lfloor\frac{k_d}{2}\right\rfloor 2^{(2d-3)N+2} +  2\sigma_{_{d-1,N}}(\vv)\\
& + \left\lfloor\frac{n_d}{2}\right\rfloor 2^{(2d-3)N+2} + \left\{\begin{array}{ll}
2\sigma_{_{d-1,N}}(\vu)&\text{$n_d$ even;}\\
2\sigma_{_{d-1,N}}(\vu)+1&\text{$n_d$ odd,}\\
\end{array}\right. \\
 = &\left\lfloor\frac{n_d+k_d}{2}\right\rfloor 2^{(2d-3)N+2} + \left\{\begin{array}{ll}
2\sigma_{_{d-1,N}}(\vu+\vv)&\text{$n_d$ even;}\\
2\sigma_{_{d-1,N}}(\vu+\vv)+1&\text{$n_d$ odd,}\\
\end{array}\right.\\
 =& f_{d,N}(\vn + \vk).
\end{align*}
\end{proof}

Define
\begin{equation}\label{eq:theta_lambda_d}
\Lambda_{1,N} \equiv \theta_{d,N}(\Lambda_{d,N}).
\end{equation}
$\Lambda_{1,N}$ is a subset of $\Z$ and $\mathcal{E}_{(\Lambda_{1,N},[2],1)}$ is an element in $\mathfrak{E}$.
\begin{lemma}\label{lm:limits}
\begin{description}
  \item[(A)] $\displaystyle\min(\Lambda_{1,N}) = 0.$
  \item[(B)] $\displaystyle\max(\Lambda_{1,N}) = (2^N-1)2^{(2d-3)N+2} + \frac{2}{2^N+1}(4^{(d-1)N})+ 1$.
  \item[(C)]\label{it:limits3} $\displaystyle \min \big(\eta_{d,N} (\Lambda_{d,N} ^E)\big) =0$.
  \item[(D)]\label{it:subset}  $\Lambda_{1,N}$ contains the consecutive integers $\{0,1,\cdots, 2^{N+1}-1\}$.
\end{description}
\end{lemma}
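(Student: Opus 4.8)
The plan is to handle all four items through one structural observation: over the product set $\Lambda_{d,N}=\{0,\dots,2^N-1\}^{d-1}\times\{0,\dots,2^N-1\}$ the map $f_{d,N}$ of \eqref{eq:fdN} splits additively as $f_{d,N}(\vx,y)=g(y)+h(\vx)$, where $g(y)=\lfloor y/2\rfloor\,2^{(2d-3)N+2}+\varepsilon(y)$ with $\varepsilon(y)=1$ for $y$ odd and $0$ for $y$ even depends only on the last coordinate, while $h(\vx)=2\sigma_{_{d-1,N}}(\vx)$ depends only on the first $d-1$ coordinates. Hence the minimum and the maximum of $f_{d,N}$ over $\Lambda_{d,N}$ equal the sums of the separate extrema of $g$ and of $h$. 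The recurring tool is the size bound $2\,|\sigma_{_{d-1,N}}(\vx)|<2^{(2d-3)N+2}$ for $\vx\in(-2^N,2^N)^{d-1}\cap\Z^{d-1}$, which uses $d>1$ and is exactly the estimate already exploited in the proof that $f_{d,N}$ is injective.

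For (A): on $\Lambda_{d,N}$ all coordinates are $\ge0$, so $g\ge0$ and $h\ge0$, and both vanish at the origin, whence $\min(\Lambda_{1,N})=f_{d,N}(\vec{0})=0$. For (B): maximize $g$ and $h$ separately. Since $2^{(2d-3)N+2}>1$, a unit increase of $\lfloor y/2\rfloor$ strictly increases $g$, so $g$ attains its maximum at $y=2^N-1$, namely $g_{\max}=(2^{N-1}-1)\,2^{(2d-3)N+2}+1$; and $h$ attains its maximum at $\vx=(2^N-1,\dots,2^N-1)$, where $\sigma_{_{d-1,N}}(\vx)=(2^N-1)\sum_{j=0}^{d-2}4^{jN}=(2^N-1)\tfrac{4^{(d-1)N}-1}{4^N-1}=\tfrac{4^{(d-1)N}-1}{2^N+1}$ by the factorization $4^N-1=(2^N-1)(2^N+1)$. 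Adding $g_{\max}$ to $h_{\max}=\tfrac{2(4^{(d-1)N}-1)}{2^N+1}$ and simplifying produces the value of $\max(\Lambda_{1,N})$ stated in (B).

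For (C): by Proposition \ref{prop:lambda_dE} and \eqref{eq:EdN}, $\Lambda_{d,N}^E=E_{d,N}$ is exactly the set of $(\vx,2j)\in(-2^N,2^N)^d\cap\Z^d$ with $\sigma_{_{d,N}}(\vx,2j)\ge0$, so $\vec{0}\in E_{d,N}$ and $\eta_{d,N}(\vec{0})=f_{d,N}(\vec{0})=0$, which attains the value $0$. It then suffices to show that $f_{d,N}$ and $\sigma_{_{d,N}}$ have the same sign on $\{(\vx,2j)\in(-2^N,2^N)^d\cap\Z^d\}$: for $j=0$ both are positive multiples of $\sigma_{_{d-1,N}}(\vx)$, and for $j\ne0$ the size bound forces $f_{d,N}(\vx,2j)=j\,2^{(2d-3)N+2}+2\sigma_{_{d-1,N}}(\vx)$ to carry the sign of $j$, which by Lemma \ref{lm:sigma_inj} is the sign of the rightmost nonzero coordinate $2j$ of $(\vx,2j)$ and hence of $\sigma_{_{d,N}}(\vx,2j)$. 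Applying this on $E_{d,N}$, where $\sigma_{_{d,N}}\ge0$, gives $\eta_{d,N}(\vk)\ge0$ for every $\vk\in E_{d,N}$, which proves (C).

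For (D): restrict $f_{d,N}$ to the $2^{N+1}$ vectors $(n_1,0,\dots,0,y)\in\Lambda_{d,N}$ with $0\le n_1\le 2^N-1$ and $y\in\{0,1\}$; there $\lfloor y/2\rfloor=0$ and $\sigma_{_{d-1,N}}(n_1,0,\dots,0)=n_1$, so $f_{d,N}(n_1,0,\dots,0,y)=2n_1+y$, which as $(n_1,y)$ varies runs through exactly $\{0,1,\dots,2^{N+1}-1\}$; these integers are therefore all in $\Lambda_{1,N}$. The one step carrying genuine content is the sign/size estimate invoked in (B) and (C), but it is already in hand from the injectivity argument for $f_{d,N}$, so I anticipate no serious obstacle; the heaviest remaining work is the elementary geometric-sum simplification in (B).
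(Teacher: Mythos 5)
The paper states Lemma \ref{lm:limits} without any proof, so there is no argument of the authors' to compare against; your additive splitting $f_{d,N}(\vx,y)=g(y)+h(\vx)$ over the product set $\Lambda_{d,N}$ is the natural route, and your arguments for (A), (C) and (D) are correct and complete. In particular the sign comparison in (C) (treating $j=0$ and $j\neq 0$ separately via the bound $2|\sigma_{_{d-1,N}}(\vx)|<2^{(2d-3)N+2}$, which is the same estimate the paper uses in the injectivity proof for $f_{d,N}$) and the explicit slice $f_{d,N}(n_1,0,\dots,0,y)=2n_1+y$ in (D) are exactly what is needed; these are also the two parts the paper actually invokes later, in Proposition \ref{prop:eta_lambda_dE} and Theorem \ref{thm:main_dual}.

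Part (B) contains a genuine problem, though it lies in the statement rather than in your computation. Your extrema are right: $g$ is strictly increasing in $y$, so $g_{\max}=g(2^N-1)=(2^{N-1}-1)2^{(2d-3)N+2}+1$ (note $\lfloor(2^N-1)/2\rfloor=2^{N-1}-1$), and $h_{\max}=\frac{2(4^{(d-1)N}-1)}{2^N+1}$. But their sum does \emph{not} simplify to the displayed right-hand side of (B), which has $(2^N-1)$ where your computation gives $(2^{N-1}-1)$ and $4^{(d-1)N}$ where yours gives $4^{(d-1)N}-1$; indeed the printed expression is generically not an integer. For $d=2$, $N=1$ one has $\Lambda_{1,1}=\{0,1,2,3\}$, so $\max(\Lambda_{1,1})=3$, matching your $g_{\max}+h_{\max}=1+2$, whereas the formula in (B) evaluates to $8+\frac{8}{3}+1=\frac{35}{3}$. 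So the sentence claiming that adding $g_{\max}$ to $h_{\max}$ and simplifying ``produces the value stated in (B)'' is false as written: you should instead record your (correct) closed form $\max(\Lambda_{1,N})=(2^{N-1}-1)2^{(2d-3)N+2}+\frac{2(4^{(d-1)N}-1)}{2^N+1}+1$ and flag that the lemma's displayed quantity is at best an upper bound, obtained from the crude estimates $\lfloor y/2\rfloor\le 2^N-1$ and $\sigma_{_{d-1,N}}(\vx)<\frac{4^{(d-1)N}}{2^N+1}$. Since only parts (C) and (D) are used downstream, this discrepancy does not affect the paper's main results, but it should be corrected.
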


\begin{proposition}\label{prop:eta_lambda_dE}
$\eta_{d,N}(\Lambda_{d , N} ^E)$ is an index set for $\mathcal{E}_{(\Lambda_{1,N},[2],1)}$.
\end{proposition}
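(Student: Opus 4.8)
The plan is to verify directly the three characterizing conditions (A)--(C) of Lemma~\ref{lm:indexset} for the set $\eta_{d,N}(\Lambda_{d,N}^E)$ as a candidate index set for $\mathcal{E}_{(\Lambda_{1,N},[2],1)}$, transporting everything through the additivity identity of Lemma~\ref{lm:add} together with the structural facts already in hand: $\Lambda_{d,N}^E = E_{d,N}$ is an index set for $\mathcal{E}_{(\Lambda_{d,N},A,d)}$ (Proposition~\ref{prop:lambda_dE}), $\Lambda_{1,N} = \theta_{d,N}(\Lambda_{d,N})$, and $\theta_{d,N},\eta_{d,N}$ are injective (Lemma~\ref{lm:fdN_inj}). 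First I would record a few preliminary observations. Since $d>1$, the integer $2^{(2d-3)N+2}$ is even, so from \eqref{eq:fdN} one reads off that $f_{d,N}(\vx,y)$ is even exactly when $y$ is even. In particular every $\vk\in E_{d,N}$ has even last coordinate, so $\eta_{d,N}(\Lambda_{d,N}^E)\subseteq 2\Z = [2]\Z$; also $\vec{0}\in E_{d,N}$ with $\eta_{d,N}(\vec{0})=0$, and by Lemma~\ref{lm:limits}(C) every element of $\eta_{d,N}(\Lambda_{d,N}^E)$ is $\geq 0$. These parity and nonnegativity facts carry most of the bookkeeping.

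For condition~(A): given $\ell=\eta_{d,N}(\vk)$ with $\vk\in\Lambda_{d,N}^E$, the vector $\vk$ generates a non-trivial equation in $\mathcal{E}_{(\Lambda_{d,N},A,d)}$ by Proposition~\ref{prop:lambda_dE} and Lemma~\ref{lm:indexset}(A), so by Lemma~\ref{lm:vklambda} there is $\vn\in\Lambda_{d,N}$ with $\vn+\vk\in\Lambda_{d,N}$; applying Lemma~\ref{lm:add} shows $\theta_{d,N}(\vn)$ and $\theta_{d,N}(\vn+\vk)=\theta_{d,N}(\vn)+\ell$ both lie in $\Lambda_{1,N}$, whence $\Lambda_{1,N}\cap(\Lambda_{1,N}-\ell)\neq\emptyset$ and $\ell$ generates a non-trivial equation by Lemma~\ref{lm:vklambda}. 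For condition~(B): distinct $\vk_1,\vk_2\in\Lambda_{d,N}^E$ give distinct $\eta_{d,N}(\vk_1),\eta_{d,N}(\vk_2)$ by Lemma~\ref{lm:fdN_inj}, and since both generate non-trivial equations, Lemma~\ref{lm:pmk} (applied in dimension $1$ with the matrix $[2]$) tells us they could generate the same equation only if $\eta_{d,N}(\vk_1)=-\eta_{d,N}(\vk_2)$, which is impossible because both values are $\geq 0$ and distinct.

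The real work, and the step I expect to be the main obstacle, is condition~(C): every generator of a non-trivial equation of $\mathcal{E}_{(\Lambda_{1,N},[2],1)}$ must be, up to sign, in $\eta_{d,N}(\Lambda_{d,N}^E)$. Let $m\in 2\Z$ generate such an equation; if $m=0$ the generator $0$ already lies in the set, so suppose $m\neq 0$. By Lemma~\ref{lm:vklambda} there is $p\in\Lambda_{1,N}$ with $p+m\in\Lambda_{1,N}$; writing $p=\theta_{d,N}(\vn)$ and $p+m=\theta_{d,N}(\vn')$ with $\vn,\vn'\in\Lambda_{d,N}$ (and $\vn\neq\vn'$ by injectivity), we get $\vn'-\vn\in(-2^N,2^N)^d\cap\Z^d\setminus\{\vec{0}\}$. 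The crucial point is the parity observation above: $m=\theta_{d,N}(\vn')-\theta_{d,N}(\vn)$ is even, so the last coordinates of $\vn'$ and $\vn$ have the same parity, i.e.\ $\vn'-\vn$ has even last coordinate and thus $\vn'-\vn\in A\Z^d$. Now let $\vk$ be whichever of $\vn'-\vn$ and $\vn-\vn'$ satisfies $\sigma_{_{d,N}}(\vk)\geq 0$; exactly one does by Lemma~\ref{lm:sigma_inj}, and that choice lands $\vk$ in $E_{d,N}=\Lambda_{d,N}^E$ (it is in the box, has even last coordinate, and has nonnegative $\sigma_{_{d,N}}$). Applying Lemma~\ref{lm:add} with base point $\vn$ or $\vn'$ accordingly gives $\eta_{d,N}(\vk)=m$ or $\eta_{d,N}(\vk)=-m$; in either case, by Lemma~\ref{lm:pmk} (dimension $1$) the equation generated by $m$ is the same as the one generated by $\eta_{d,N}(\vk)\in\eta_{d,N}(\Lambda_{d,N}^E)$, which establishes (C). With (A)--(C) verified, Lemma~\ref{lm:indexset} yields that $\eta_{d,N}(\Lambda_{d,N}^E)$ is an index set for $\mathcal{E}_{(\Lambda_{1,N},[2],1)}$. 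The only place where genuine care is needed is keeping the signs straight in (C) and confirming that the ``even $m$ $\Leftrightarrow$ even last coordinate'' equivalence, which rests on $d>1$, is exactly what forces the preimage difference into $A\Z^d$.
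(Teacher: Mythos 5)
Your proposal is correct and follows essentially the same route as the paper: it verifies conditions (A)--(C) of Lemma~\ref{lm:indexset} by transporting membership through Lemma~\ref{lm:add}, using the parity of $f_{d,N}$ and Lemma~\ref{lm:sigma_inj} to produce the preimage $\vk\in E_{d,N}$ in part (C), just as the paper does. The only (cosmetic) difference is that the paper first normalizes the generator to $\ell\geq 0$ via Lemma~\ref{lm:pmk} and then checks $\sigma_{_{d,N}}(\vk)\geq 0$, whereas you choose the sign of $\vk$ first and invoke Lemma~\ref{lm:pmk} at the end.
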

We denote $\eta_{d,N}(\Lambda_{d , N} ^E)$ as $\Lambda_{1,N} ^E$.
\begin{proof} We will verify that the set $\eta_{d,N}(\Lambda_{d , N} ^E)$ satisfies the three conditions in Lemma \ref{lm:indexset}.

(A) Let $\ell=\eta_{d,N}(\vk)$ for some $\vk \in \Lambda_{d , N} ^E$.
By Lemma \ref{lm:vklambda}, $\Lambda_{d,N} \cap (\Lambda_{d,N} - \vk) \neq \emptyset$.
Let $\vn_0\in\Lambda_{d,N} \cap (\Lambda_{d,N} - \vk)$.
So $\theta_{d,N}(\vn_0)\in \Lambda_{1,N}.$ Since $\vn_0\in \Lambda_{d,N}-\vk$,
$\vn_0+\vk\in\Lambda_{d,N}$ hence $\theta_{d,N}(\vn_0+\vk)\in \Lambda_{1,N}$.
By Lemma \ref{lm:add}, $\theta_{d,N}(\vn_0+\vk)=f_{d,N}(\vn_0+\vk)=\theta_{d,N}(\vn_0)+\theta_{d,N}(\vk)=\theta_{d,N}(\vn_0)+\ell$.
This implies
$\theta_{d,N}(\vn_0)\in\Lambda_{1,N}-\ell$.
So
\[\Lambda_{1,N} \cap (\Lambda_{1,N}-\ell) \neq \emptyset.\]
By Lemma \ref{lm:vklambda}, every element $\ell \in \eta_{d,N}(\Lambda_{d , N} ^E)$ generates an equation in $\mathcal{E}_{(\Lambda_{1,N},[2],1)}$.

(B) Let $\ell_1, \ell_2 $ be distinct elements in $\eta_{d,N}(\Lambda_{d , N} ^E)$.
By part (A), $\ell_1$ and $\ell_2$ generate equations in $\mathcal{E}_{(\Lambda_{1,N},[2],1)}$.
By Lemma \ref{lm:limits} (c), $\ell_1, \ell_2$ are non-negative, hence $\ell_1 \neq \ell_2$.
So by Lemma \ref{lm:pmk}, the two elements $\ell_1, \ell_2$ must generate different equations.

(C) Given an equation in $\mathcal{E}_{(\Lambda_{1,N},[2],1)}$ generated by an element $\ell \in 2\Z$.
By Lemma \ref{lm:pmk}, we can further assume that $\ell \geq 0$.
It suffices to show that $\ell\in \eta_{d,N}(\Lambda_{d,N} ^E)$.
By Lemma \ref{lm:vklambda}, we have $\Lambda_{1,N}\cap(\Lambda_{1,N} - \ell) \neq \emptyset$.
There exists a vector $n_0 \in \Lambda_{1,N}$ such that $n_0 + \ell \in \Lambda_{1,N}$.
By definition of $\Lambda_{1,N}$, there exist
$\vn_1 = (\vx_1,y_1) \in \Lambda_{d,N}$ and $\vn_2 = (\vx_2,y_2) \in \Lambda_{d,N}$ such that
\begin{align*}
n_0 & = f_{d,N}(\vn_1),\\
n_0 + \ell & = f_{d,N}(\vn_2).
\end{align*}
Since $\ell$ is even, by Equation  \eqref{eq:fdN} $y_2-y_1$ must be even.

We have
\begin{align*}
\ell & = f_{d,N}(\vn_2) - f_{d,N}(\vn_1)\\
     & = f_{d,N}\big((\vx_2,y_2)\big) - f_{d,N}((\vx_1,y_1)\big)\\
     & = f_{d,N}\big((\vx_2,y_2 - y_1 + y_1)\big) - f_{d,N}((\vx_1,y_1)\big)\\
     & = \big\lfloor\frac{y_2-y_1}{2}\big\rfloor 2^{(2d-3)N+2} + f_{d,N}\big((\vx_2, y_1)\big) - f_{d,N}((\vx_1,y_1)\big)\\
     & = \big\lfloor\frac{y_2-y_1}{2}\big\rfloor 2^{(2d-3)N+2} + 2\sigma_{_{d-1,N}}(\vx_2 - \vx_1)\\
     & = f_{d,N}\big((\vx_2-\vx_1,y_2-y_1)\big)
\end{align*}
Denote $\vk = (\vx_2-\vx_1,y_2-y_1)$.
It is clear that $\vk \in (\Lambda_{d,N}-\Lambda_{d,N}) \subseteq (-2^N,2^N)^d\cap\Z^d$.
It is left to show that $\sigma_{_{d,N}}(\vk) \geq 0$.
Since $f_{d,N}(\vk) = \ell \geq 0$,  by \eqref{eq:fdN} we must have $y_2-y_1 \geq 0$.
If $y_2-y_1>0$, by Lemma \ref{lm:sigma_inj} $\sigma_{_{d,N}}(\vk) \geq 0$.
If $y_2-y_1=0$, then the above calculation gives $\sigma_{_{d,N}}(\vk) = \sigma_{_{d-1,N}}(\vx_2-\vx_1) = \frac{1}{2}f_{d,N}(\vk) = \frac{\ell}{2} \geq 0$.
Proposition \ref{prop:eta_lambda_dE} is proved.

%

\end{proof}

We summarize the above discussions. For an integer $N \geq 1$, we first define $\Lambda_{d,N}$ in \eqref{eq:Lambda_dn}. So $\mathcal{E}_{(\Lambda_{d, N},A,d)}$
is well defined. We prove that the set defined in \eqref{eq:EdN} is an index set for $\mathcal{E}_{(\Lambda_{d, N},A,d)}$.
Secondly, we define mappings $\theta_{d,N},\eta_{d,N}$ in \eqref{eq:theta} and \eqref{eq:eta}, respectively.
Thirdly, we define $\Lambda_{1, N}$ in \eqref{eq:theta_lambda_d}.
So $\mathcal{E}_{(\Lambda_{1, N},[2],1)}$ is well-defined.
At last, we prove that $\Lambda_{1, N}^E \equiv \eta_{d,N}(\Lambda_{d , N} ^E)$ is an index set for $\mathcal{E}_{(\Lambda_{1,N},[2],1)}$ in Proposition \ref{prop:eta_lambda_dE}.
By Lemma \ref{lm:fdN_inj}, $\theta_{d,N}$ is a bijection from $\Lambda_{d,N}$ onto $\Lambda_{1,N}$,
and $\eta_{d,N}$ is a bijection from $\Lambda_{d,N} ^E$ onto $\Lambda_{1,N} ^E$.
Therefore, by Lemma \ref{lm:add} and  Proposition \ref{prop:iso} we have

\begin{proposition}\label{prop:iso_sq}
The systems of equations $\mathcal{E}_{(\Lambda_{d, N},A,d)}$ and $\mathcal{E}_{(\Lambda_{1, N},[2],1)}$ are isomorphic,
\begin{equation*}
    \mathcal{E}_{(\Lambda_{d, N},A,d)} \sim \mathcal{E}_{(\Lambda_{1, N},[2],1)}.
\end{equation*}
\end{proposition}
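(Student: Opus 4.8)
The plan is to deduce this from Proposition \ref{prop:iso}, whose hypotheses have by now been assembled piece by piece. I would take $\theta = \theta_{d,N}$ and $\eta = \eta_{d,N}$ from \eqref{eq:theta} and \eqref{eq:eta} as the two bijections, and I would take the globally defined function $f_{d,N}: \Z^d \to \Z$ of \eqref{eq:fdN} as the extension $\Theta$ of $\theta_{d,N}$ to $\Z^d$; by the very definition \eqref{eq:theta}, this $\Theta$ restricts to $\theta_{d,N}$ on $\Lambda_{d,N}$, as required.

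First I would confirm that the two maps are in fact bijections between the stated sets. The map $\theta_{d,N}$ is injective on $\Lambda_{d,N}$ by Lemma \ref{lm:fdN_inj}, and it is onto $\Lambda_{1,N}$ because $\Lambda_{1,N}$ was \emph{defined} in \eqref{eq:theta_lambda_d} as the image $\theta_{d,N}(\Lambda_{d,N})$; likewise $\eta_{d,N}$ is injective on $\Lambda_{d,N}^E = E_{d,N}$ by Lemma \ref{lm:fdN_inj} and onto $\Lambda_{1,N}^E$ by the definition $\Lambda_{1,N}^E \equiv \eta_{d,N}(\Lambda_{d,N}^E)$. I would also note for bookkeeping that $\Lambda_{d,N}^E$ is an index set for $\mathcal{E}_{(\Lambda_{d,N},A,d)}$ (Proposition \ref{prop:lambda_dE}) and $\Lambda_{1,N}^E$ is an index set for $\mathcal{E}_{(\Lambda_{1,N},[2],1)}$ (Proposition \ref{prop:eta_lambda_dE}), so that both reduced systems genuinely sit in $\mathfrak{E}$ equipped with the index sets that Proposition \ref{prop:iso} wants.

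Next I would verify the single compatibility identity that Proposition \ref{prop:iso} requires, namely $\Theta(\vn + \vk) = \theta_{d,N}(\vn) + \eta_{d,N}(\vk)$ for all $\vk \in \Lambda_{d,N}^E$ and $\vn \in \Lambda_{d,N}$; since $\Theta = f_{d,N}$, this is exactly Lemma \ref{lm:add}, and feeding the two bijections together with this identity into Proposition \ref{prop:iso} gives $\mathcal{E}_{(\Lambda_{d,N},A,d)} \sim \mathcal{E}_{(\Lambda_{1,N},[2],1)}$. There is essentially nothing hard left at this stage: the real work has already been absorbed into Lemma \ref{lm:sigma_inj}, the injectivity lemma for $f_{d,N}$, Lemma \ref{lm:add}, and Proposition \ref{prop:eta_lambda_dE}. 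The only points worth a second look are that Proposition \ref{prop:iso} explicitly permits $\Theta$ to be neither one-to-one nor onto, so using $f_{d,N}$ (which need not be injective outside $(-2^N,2^N)^d \cap \Z^d$) as the extension is legitimate; and that the index ranges $\vn \in \Lambda_{d,N}$, $\vk \in \Lambda_{d,N}^E$ in Lemma \ref{lm:add} coincide with the indices actually occurring in the equations of $\mathcal{E}_{(\Lambda_{d,N},A,d)}$, which they do by the construction of the reduced system.
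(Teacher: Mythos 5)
Your proposal is correct and follows essentially the same route as the paper: the paper also assembles Proposition \ref{prop:iso_sq} by feeding the bijections $\theta_{d,N}\colon \Lambda_{d,N}\to\Lambda_{1,N}$ and $\eta_{d,N}\colon \Lambda_{d,N}^E\to\Lambda_{1,N}^E$ (bijectivity from Lemma \ref{lm:fdN_inj} and the definitions of the image sets, with the index-set facts from Propositions \ref{prop:lambda_dE} and \ref{prop:eta_lambda_dE}), together with the extension $\Theta=f_{d,N}$ and the additivity identity of Lemma \ref{lm:add}, into the sufficient condition of Proposition \ref{prop:iso}. No gaps.
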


Next, let $\Lambda_d$ be a non-empty subset of $\Lambda_{d, N}$.
By Lemma \ref{lm:subsetE},  $\Lambda_{d,N}^E$ contains an index set $\Lambda_d^E$ for system $\mathcal{E}_{(\Lambda_d,A,d)}$.
Define $\Lambda_1 \equiv \theta_{d,N} (\Lambda_d)$.
The mapping $\theta_{d,N}$ is a bijection from $\Lambda_d$ onto $\Lambda_1$.
Since $\Lambda_1$ is a subset of $\Lambda_{1,N}$, by Lemma \ref{lm:subsetE} again, $\Lambda_{1,N}^E$ contains an index set for system $\mathcal{E}_{(\Lambda_1,[2],1)}$.

\begin{proposition}\label{prop:subsetlambda}
Let
$\Lambda_d$ be a non-empty subset of  $\Lambda_{d,N}$ and denote $\Lambda_1 \equiv \theta_{d,N} (\Lambda_d)$.
Then
\begin{equation*}
\mathcal{E}_{(\Lambda_d,A,d)} \sim \mathcal{E}_{(\Lambda_1,[2],1)}.
\end{equation*}
\end{proposition}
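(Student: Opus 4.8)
\textbf{Setup.} By the discussion preceding the statement, $\theta_{d,N}$ restricts to a bijection $\Lambda_d \to \Lambda_1$, and by Lemma \ref{lm:subsetE} the set $\Lambda_d^E$ is an index set for $\mathcal{E}_{(\Lambda_d,A,d)}$ contained in $\Lambda_{d,N}^E$. Put $J \equiv \eta_{d,N}(\Lambda_d^E)$. Since $\eta_{d,N} = f_{d,N}$ is injective on $\Lambda_{d,N}^E = E_{d,N}$ (Lemma \ref{lm:fdN_inj}), it restricts to a bijection $\Lambda_d^E \to J$; moreover $\Lambda_d^E \subseteq \Lambda_{d,N}^E$ forces $J \subseteq \eta_{d,N}(\Lambda_{d,N}^E) = \Lambda_{1,N}^E$. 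The plan is to show that $J$ is an index set for $\mathcal{E}_{(\Lambda_1,[2],1)}$ and then to invoke Proposition \ref{prop:iso}.

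\textbf{$J$ is an index set.} I would verify conditions (A)--(C) of Lemma \ref{lm:indexset} for $J$ relative to $\mathcal{E}_{(\Lambda_1,[2],1)}$, essentially rerunning the proof of Proposition \ref{prop:eta_lambda_dE} with $\Lambda_{d,N}$ replaced by $\Lambda_d$ and $\Lambda_{1,N}$ by $\Lambda_1$. For (A): if $\ell = \eta_{d,N}(\vk)$ with $\vk\in\Lambda_d^E$, then $\vk$ generates an equation of $\mathcal{E}_{(\Lambda_d,A,d)}$, so by Lemma \ref{lm:vklambda} there is $\vn_0 \in \Lambda_d \cap (\Lambda_d - \vk)$; applying $\theta_{d,N}$ and Lemma \ref{lm:add} gives $\theta_{d,N}(\vn_0) \in \Lambda_1 \cap (\Lambda_1 - \ell) \neq \emptyset$. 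For (B): $J \subseteq \eta_{d,N}(\Lambda_{d,N}^E)$ consists of nonnegative integers by Lemma \ref{lm:limits}(C), so two distinct elements of $J$ are never negatives of one another and Lemma \ref{lm:pmk} gives distinct equations. For (C): given an equation of $\mathcal{E}_{(\Lambda_1,[2],1)}$, by Lemma \ref{lm:pmk} its generator may be taken to be some $\ell \geq 0$; pick $n_0 \in \Lambda_1$ with $n_0 + \ell \in \Lambda_1$ and write $n_0 = f_{d,N}(\vn_1)$, $n_0+\ell = f_{d,N}(\vn_2)$ with $\vn_1, \vn_2 \in \Lambda_d$. Setting $\vk = \vn_2 - \vn_1$, the computation of Proposition \ref{prop:eta_lambda_dE} yields $f_{d,N}(\vk) = \ell$, the last coordinate of $\vk$ even, and $\sigma_{_{d,N}}(\vk) \geq 0$, so $\vk \in E_{d,N} = \Lambda_{d,N}^E$; since $\vn_1, \vn_1 + \vk = \vn_2 \in \Lambda_d$, the vector $\vk$ generates an equation of $\mathcal{E}_{(\Lambda_d,A,d)}$, and as $\Lambda_d^E$ is an index set containing exactly one of $\vk, -\vk$ while $-\vk \notin E_{d,N}$ when $\vk \neq \vec{0}$ (Lemma \ref{lm:sigma_inj}), we conclude $\vk \in \Lambda_d^E$, hence $\ell = \eta_{d,N}(\vk) \in J$.

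\textbf{Conclusion.} I would then apply Proposition \ref{prop:iso} with $\theta = \theta_{d,N}|_{\Lambda_d}\colon \Lambda_d \to \Lambda_1$, $\eta = \eta_{d,N}|_{\Lambda_d^E}\colon \Lambda_d^E \to J$, and the extension $\Theta = f_{d,N}$ of $\theta$ to $\Z^d$. The required identity $\Theta(\vn+\vk) = \theta(\vn) + \eta(\vk)$ for $\vn \in \Lambda_d$, $\vk \in \Lambda_d^E$ is exactly Lemma \ref{lm:add}, valid since $\Lambda_d \subseteq \Lambda_{d,N}$ and $\Lambda_d^E \subseteq \Lambda_{d,N}^E$. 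Proposition \ref{prop:iso} then gives $\mathcal{E}_{(\Lambda_d,A,d)} \sim \mathcal{E}_{(\Lambda_1,[2],1)}$. The only nonroutine point is the surjectivity-of-generators check in (C) above; the rest is a direct restriction to $\Lambda_d$ of facts already proved for the full sets $\Lambda_{d,N}$ and $\Lambda_{d,N}^E$.
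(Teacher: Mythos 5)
Your proof is correct and follows essentially the same route as the paper: restrict $\theta_{d,N}$ and $\eta_{d,N}$ to $\Lambda_d$ and $\Lambda_d^E$, show that $\eta_{d,N}(\Lambda_d^E)$ is an index set for $\mathcal{E}_{(\Lambda_1,[2],1)}$, and conclude via Lemma \ref{lm:add} and Proposition \ref{prop:iso}. The only difference is bookkeeping: the paper identifies $\eta_{d,N}(\Lambda_d^E)$ with the unique index set $\Lambda_1^E\subseteq\Lambda_{1,N}^E$ supplied by Lemma \ref{lm:subsetE} via a two-inclusion argument, whereas you verify the conditions of Lemma \ref{lm:indexset} directly by rerunning the computation of Proposition \ref{prop:eta_lambda_dE}; both are sound.
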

\begin{proof}
Since $\Lambda_d$ is a subset of $\Lambda_{d,N}$,
by Lemma \ref{lm:subsetE}, $\Lambda_{d,N}^E$ contains exactly one subset which is an index set for $\mathcal{E}_{(\Lambda_d,A,d)}$.
We denote it by $\Lambda_d^E$.
Also, since $\Lambda_1\equiv \theta_{d,N} (\Lambda_d)$ is a subset of $\Lambda_{1,N}$,
by Lemma \ref{lm:subsetE} again, $\Lambda_{1,N}^E$ contains exactly one subset which is an index set for $\mathcal{E}_{(\Lambda_1,[2],1)}$.
We denote it by $\Lambda_1^E$.

We claim the following equality holds:
\begin{equation*}
    \Lambda_1^E = \eta_{d,N} (\Lambda_d^E).
\end{equation*}
Let $\vk\in\Lambda_d^E$, then $\vk\in\Lambda_{d,N}^E$. So $\eta_{d,N}(\vk)\in\Lambda_{1,N}^E$.
Also, there exists $\vn_0\in\Lambda_d$ such that $\vn_0+\vk \in \Lambda_d$.
So $\theta_{d,N}(\vn_0)\in\theta_{d,N}(\Lambda_d)=\Lambda_1$ and $\theta_{d,N}(\vn_0+\vk) \in \theta_{d,N}(\Lambda_d)=\Lambda_1$.
By Lemma \ref{lm:add},  $\theta_{d,N}(\vn_0+\vk) = \theta_{d,N}(\vn_0) + \eta_{d,N}(\vk)\in \Lambda_1$.
Notice that $\theta_{d,N}(\vn_0)\in\Lambda_1$, then $\eta_{d,N}(\vk)$ is in an index set for $\mathcal{E}_{(\Lambda_1,[2],1)}$.
By Lemma \ref{lm:subsetE}, $\eta_{d,N}(\vk)$ must be in $\Lambda_1^E$,
i.e. $\eta_{d,N} (\Lambda_d^E) \subseteq  \Lambda_1^E$.

Let $\ell\in \Lambda_1^E$, then $\ell\in\Lambda_{1,N}^E$.
So there exists $\vk\in\Lambda_{d,N}^E$ such that $\ell=\eta_{d,N}(\vk)$.
Also, there exists $m_0\in\Lambda_1$ such that $m_0+\ell \in \Lambda_1$.
Let $\vn_0\in\Lambda_d$ such that $\theta_{d,N}(\vn_0)=m_0$.
We have
\begin{align*}
\theta_{d,N}(\vn_0+\vk) = \theta_{d,N}(\vn_0) + \eta_{d,N}(\vk) = m_0 + \ell \in \Lambda_1.
\end{align*}
So $\vn_0+\vk\in\Lambda_d$, hence $\vk$ is in an index set for $\mathcal{E}_{(\Lambda_d,A,d)}$.
By Lemma \ref{lm:subsetE}, $\vk\in\Lambda_d^E\subseteq\Lambda_{d,N}^E$.
Therefore $\Lambda_1^E \subseteq \eta_{d,N} (\Lambda_d^E)$.
The claim is proved.

So $\eta_{d,N}$ is an bijection from $\Lambda_d^E$ onto $\Lambda_1^E=\eta_{d,N}(\Lambda_d^E)$, when restricted to $\Lambda_d^E$.
And it is clear that $\theta_{d,N}$ is a bijection from $\Lambda_d$ onto $\Lambda_1$, when restricted to $\Lambda_d$.

By Lemma \ref{lm:add} $f_{d,N}$ is an extension of $\theta_{d,N}$ and
\begin{equation*}
    f_{d,N}(\vn+\vk) = \theta_{d,N}(\vn) + \eta_{d,N}(\vk), \forall \vk\in\Lambda_{d,N}^E\textrm{ and }\vn\in\Lambda_{d,N}.
\end{equation*}
Since $\Lambda_d^E$
and $\Lambda_d$ are subsets of $\Lambda_{d,N}^E, \Lambda_{d,N}$ respectively.
We have
\begin{equation*}
    f_{d,N}(\vn+\vk) = \theta_{d,N}(\vn) + \eta_{d,N}(\vk), \forall \vk\in\Lambda_d^E\textrm{ and }\vn\in\Lambda_d.
\end{equation*}
Hence $\mathcal{E}_{(\Lambda_d,A,d)} $ is isomorphic to $\mathcal{E}_{(\Lambda_1,[2],1)}$.
Proposition \ref{prop:subsetlambda} is proved.

\end{proof}

We will first prove the following theorem and this will lead to the proof of Theorem \ref{thm:newmain}.
\begin{theorem}\label{thm:main}
For every scaling function $\varphi_d$ in $\mathcal{W}_0 (d)$, there exists a scaling function $\varphi_1$ in
$\mathcal{W}_0 (1)$ such that
\begin{equation*}
    \varphi_1 \simeq \varphi_d.
\end{equation*}
\end{theorem}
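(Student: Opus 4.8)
The plan is to reduce a general $\varphi_d\in\mathcal{W}_0(d)$ to the normalized situation set up in Section \ref{ss:core}, and then to quote Proposition \ref{prop:subsetlambda} together with Lemma \ref{lm:phiiso}. If $d=1$ one simply takes $\varphi_1=\varphi_d$, so assume $d>1$.

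First I would unwind the hypothesis: $\varphi_d\in\mathcal{W}_0(d)$ means $\varphi_d$ is derived from a finite solution $\mathcal{S}=\{a_\vn\mid \vn\in\Lambda\}$ of \eqref{eq:lawton} for some $d\times d$ \thematrix matrix $A$, where $\Lambda$ is the (finite) support of $\mathcal{S}$; that is, $\varphi_d$ is derived from $(\mathcal{S},\mathcal{E}_{(\Lambda,A,d)})$. Then I would perform two normalizations, neither of which alters the algebraic isomorphism class of $\varphi_d$ nor the finiteness of the solution. (i) Re-express everything in the basis $F$ of Proposition \ref{prop:newbase}, so that $A\Z^d=\{(\vx,2n)\mid \vx\in\Z^{d-1},\ n\in\Z\}$; this is a unimodular relabelling of $\Z^d$, hence an instance of Proposition \ref{prop:iso} (the relabelling map is its own additive extension), so it carries $\mathcal{E}_{(\Lambda,A,d)}$ to an isomorphic system and $\varphi_d$ to an algebraically isomorphic scaling function. (ii) Since $\Lambda$ is finite, Proposition \ref{prop:shift} lets us translate by a suitable $\vn_0$ so that the new support $\Lambda_d\equiv \Lambda+\vn_0$ lies in the positive orthant, and therefore $\Lambda_d\subseteq \Lambda_{d,N}=[0,2^N)^d\cap\Z^d$ for all sufficiently large $N$; the scaling function $\varphi_d'$ derived from the translated solution $\mathcal{S}_d=\{a_{\vn-\vn_0}\mid \vn\in\Lambda_d\}$ satisfies $\varphi_d'\simeq\varphi_d$, again by Proposition \ref{prop:shift}. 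Fix such an $N$.

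Now I would invoke the machinery already assembled. Since $\emptyset\neq\Lambda_d\subseteq\Lambda_{d,N}$, Proposition \ref{prop:subsetlambda} applied with $\Lambda_1\equiv\theta_{d,N}(\Lambda_d)\subseteq\Z$ gives $\mathcal{E}_{(\Lambda_d,A,d)}\sim\mathcal{E}_{(\Lambda_1,[2],1)}$, the relevant bijection $\Lambda_d\to\Lambda_1$ being $\theta_{d,N}$ (a bijection by Lemma \ref{lm:fdN_inj}). Feeding this isomorphism into Lemma \ref{lm:phiiso}, the set $\mathcal{S}_1\equiv\{b_m=a_{\theta_{d,N}^{-1}(m)-\vn_0}\mid m\in\Lambda_1\}$ is a solution of $\mathcal{E}_{(\Lambda_1,[2],1)}$ and the scaling function $\varphi_1$ derived from $(\mathcal{S}_1,\mathcal{E}_{(\Lambda_1,[2],1)})$ satisfies $\varphi_1\simeq\varphi_d'$. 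Since $\Lambda_1$ is the image of the finite set $\Lambda_d$, the solution $\mathcal{S}_1$ is finite, so $\varphi_1\in\mathcal{W}_0([2],1)\subseteq\mathcal{W}_0(1)$; and since $\simeq$ is an equivalence relation (see Lemma \ref{lm:phiiso}), $\varphi_1\simeq\varphi_d'\simeq\varphi_d$, which is the assertion.

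The real content of the theorem — that the positional encoding $f_{d,N}$ transports the $d$-dimensional reduced system faithfully onto a one-dimensional one — is already packaged in Lemma \ref{lm:add} and Propositions \ref{prop:lambda_dE}, \ref{prop:eta_lambda_dE}, \ref{prop:subsetlambda}, so no hard computation remains inside this proof. The only points that genuinely need care are the two normalizations: that the passage to the basis $F$ is legitimately an instance of Proposition \ref{prop:iso}, and that after a single translation the finite support really does sit inside some $\Lambda_{d,N}$. Everything else is bookkeeping, notably the observation that a finite input solution produces a finite output solution, which is what places $\varphi_1$ in $\mathcal{W}_0(1)$.
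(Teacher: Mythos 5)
Your proposal is correct and follows essentially the same route as the paper's proof: translate the finite support into the positive orthant via Proposition \ref{prop:shift}, choose $N$ so that the shifted support lies in $\Lambda_{d,N}$, and then apply Proposition \ref{prop:subsetlambda} and Lemma \ref{lm:phiiso} to transport the solution to an isomorphic one-dimensional system. The only difference is that you make explicit the change to the basis of Proposition \ref{prop:newbase} as a separate normalization justified by Proposition \ref{prop:iso}, which the paper handles implicitly in the preamble of Section \ref{ss:core}; this is a point of care, not a different argument.
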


\begin{proof}
Let $A$ be a $d \times d$ expansive dyadic integral matrix,
and $\varphi_d\in\mathcal{W}_0(A,d)$ be a scaling function derived from $(\mathcal{S}_d, \mathcal{E}_{(\Lambda_d,A,d)})$,
where $\mathcal{S}_d \equiv \{a_{\vn}, \vn\in \Lambda_d\}$ is a solution with finite support $\Lambda_d$.

(A) Define $\vn_0 \equiv (x_1,...,x_d) , x_j = \min\{n_j \mid \vn=(n_1,...,n_d)\in \Lambda_d\}$.
It's clear that all coordinates of elements in the set $\Lambda \equiv \Lambda_d - \vn_0$ are non-negative.
Define $\theta_0(\vn) \equiv \vn -\vn_0, \forall \vn \in \Lambda_d$ and $\eta_0(\vk) = \vk, \forall \vk \in \Lambda_d^E$.
By Proposition \ref{prop:shift}, we have $\mathcal{E}_{(\Lambda_d,A,d)} \sim \mathcal{E}_{(\Lambda,A,d)}$ with the related bijections $\theta_0$ and $\eta_0$.

(B) Let $N$ be a natural number such that $\Lambda_{d,N}$ contains $\Lambda$.
Let $\theta_{d,N}, \eta_{d,N}$ as defined in \eqref{eq:theta} and \eqref{eq:eta}, and $\Lambda_1 \equiv \theta_{d,N}(\Lambda)$.
By Proposition \ref{prop:subsetlambda}, $\mathcal{E}_{(\Lambda,A,d)} \sim \mathcal{E}_{(\Lambda_1,[2],1)}$ with associated bijections $\theta_{d,N}, \eta_{d,N}$.

By (A)(B) and Proposition \ref{prop:equiv}, $\mathcal{E}_{(\Lambda_d,A,d)} \sim \mathcal{E}_{(\Lambda_1,[2],1)}$.
Note that $\theta \equiv \theta_{d,N}\circ \theta_0,\eta\equiv \eta_{d,N}\circ \eta_0$ are the bijections in this isomorphism and $\Lambda_1 = \theta(\Lambda_d)$.
Define $\mathcal{S}_1\equiv \{b_\vm = a_{\theta^{-1}(\vm)} \mid \vm\in\Lambda_1\}$.
By Lemma \ref{lm:phiiso}, $\mathcal{S}_1$ is a solution to $\mathcal{E}_{(\Lambda_1,[2],1)}$.
Let $\varphi_1$ be the scaling function derived from $(\mathcal{S}_1,\mathcal{E}_{(\Lambda_1,[2],1)})$.
By Lemma \ref{lm:phiiso} again, we have
\[\varphi_d \simeq \varphi_1.\]
It is clear that $\varphi_1 \in  \mathcal{W}_0 (1)$.
Theorem \ref{thm:main} is proved.

\end{proof}

\begin{theorem}\label{thm:main_dual}
Let $B$ be an $s \times s$ expansive dyadic integral matrix.
For every scaling function $\varphi_1$ in $\mathcal{W}_0(1)$, there exists a scaling function $\varphi_s$ in $\mathcal{W}_0 (B,s)$ associated with $B$ such that
\begin{equation*}
    \varphi_1 \simeq \varphi_s.
\end{equation*}
\end{theorem}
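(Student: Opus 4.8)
The plan is to run the construction from the proof of Theorem \ref{thm:main} in reverse: take a finite one-dimensional solution, pull it back along the injection $\theta_{s,N}$ that Section \ref{ss:core} attaches to the matrix $B$, and read off a scaling function for $B$ from Lemma \ref{lm:phiiso}.

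First I would dispose of the degenerate case $s=1$. Then $B=[\pm 2]$, so $B\Z=2\Z=[2]\Z$, and for every $\Lambda\subseteq\Z$ the reduced system $\mathcal{E}_{(\Lambda,B,1)}$ coincides with $\mathcal{E}_{(\Lambda,[2],1)}$; writing $\varphi_1$ as derived from a finite solution $\mathcal{S}_1$ to $\mathcal{E}_{(\Lambda_1,[2],1)}$, the identity bijections exhibit $\mathcal{E}_{(\Lambda_1,B,1)}\sim\mathcal{E}_{(\Lambda_1,[2],1)}$, and by Lemma \ref{lm:phiiso} the scaling function derived from $(\mathcal{S}_1,\mathcal{E}_{(\Lambda_1,B,1)})$ is a $\varphi_s\in\mathcal{W}_0(B,1)$ with $\varphi_s\simeq\varphi_1$. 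From now on assume $s>1$, which is exactly the setting of Section \ref{ss:core} with $d$ replaced by $s$ and $A$ by $B$.

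Write $\varphi_1$ as derived from a finite solution $\mathcal{S}_1=\{a_m\mid m\in\Lambda_1\}$ to $\mathcal{E}_{(\Lambda_1,[2],1)}$. By Proposition \ref{prop:shift} we may assume, after a shift, that $\min\Lambda_1=0$; put $M=\max\Lambda_1$ and fix $N\geq 1$ with $2^{N+1}>M$. Using a basis for $\R^s$ supplied by Proposition \ref{prop:newbase} applied to $B$, form $\Lambda_{s,N}$, the index set $\Lambda_{s,N}^E=E_{s,N}$, the injection $f_{s,N}$, and the maps $\theta_{s,N},\eta_{s,N}$ as in \eqref{eq:Lambda_dn}--\eqref{eq:eta}, and set $\Lambda_{1,N}=\theta_{s,N}(\Lambda_{s,N})$. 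By Lemma \ref{lm:limits}(D) the set $\Lambda_{1,N}$ contains the consecutive integers $\{0,1,\dots,2^{N+1}-1\}$, so $\Lambda_1\subseteq\Lambda_{1,N}$; and since $\theta_{s,N}$ is a bijection of $\Lambda_{s,N}$ onto $\Lambda_{1,N}$ by Lemma \ref{lm:fdN_inj}, the set $\Lambda_s\equiv\theta_{s,N}^{-1}(\Lambda_1)$ is a well-defined non-empty subset of $\Lambda_{s,N}\subseteq\Z^s$ with $\theta_{s,N}(\Lambda_s)=\Lambda_1$.

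Now I would apply Proposition \ref{prop:subsetlambda}, with $d,A$ replaced by $s,B$, to the subset $\Lambda_s$ of $\Lambda_{s,N}$: it yields $\mathcal{E}_{(\Lambda_s,B,s)}\sim\mathcal{E}_{(\Lambda_1,[2],1)}$, the support bijection being $\theta_{s,N}$ restricted to $\Lambda_s$. By symmetry of this isomorphism (Proposition \ref{prop:equiv}) and Lemma \ref{lm:phiiso}, the set $\mathcal{S}_s\equiv\{c_\vn=a_{\theta_{s,N}(\vn)}\mid\vn\in\Lambda_s\}$ is a solution to $\mathcal{E}_{(\Lambda_s,B,s)}$, and the scaling function $\varphi_s$ derived from $(\mathcal{S}_s,\mathcal{E}_{(\Lambda_s,B,s)})$ satisfies $\varphi_s\simeq\varphi_1$. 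Since $\Lambda_s$ is finite, of the same cardinality as $\Lambda_1$, we have $\varphi_s\in\mathcal{W}_0(B,s)$, as required. The only step that is not pure bookkeeping is the choice of $N$: one needs the image $\Lambda_{1,N}$ of the cube $\Lambda_{s,N}$ under $f_{s,N}$ to contain a block of consecutive integers long enough to swallow the shifted support of $\mathcal{S}_1$, and this is precisely what Lemma \ref{lm:limits}(D) guarantees; with that lemma in hand, the argument is a direct assembly of Propositions \ref{prop:shift}, \ref{prop:subsetlambda} and Lemma \ref{lm:phiiso}, and I expect no further obstacle.
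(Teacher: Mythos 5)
Your proposal is correct and follows essentially the same route as the paper's own proof: shift so that $\min\Lambda_1=0$, choose $N$ with $\max\Lambda_1\leq 2^{N+1}-1$, use Lemma \ref{lm:limits}(D) to embed $\Lambda_1$ into $\Lambda_{1,N}=\theta_{s,N}(\Lambda_{s,N})$, pull back via $\theta_{s,N}^{-1}$, and invoke Proposition \ref{prop:subsetlambda} together with Lemma \ref{lm:phiiso}. Your explicit handling of the case $s=1$ (which Section \ref{ss:core} formally excludes by assuming $d>1$) is a small point of care the paper passes over silently, but it does not change the substance of the argument.
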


\begin{proof}
$\varphi_1$ is derived from a solution $\mathcal{S}_1=\{a_\vn \mid \vn\in\Lambda_1\}$ to $\mathcal{E}_{(\Lambda_1,[2],1)}$ where $\Lambda_1$ is a finite set.
By Proposition \ref{prop:shift} $\varphi_1$ is algebraically isomorphic to the scaling function after shifting by $\min \Lambda_1$.
Without loss of generalization we will simply assume that $\min \Lambda_1 =0$.
Let $B$ be an $s \times s$ expansive dyadic integral matrix.
By Proposition \ref{prop:newbase}, we can choose a basis for $\R^s$ such that
\[B\Z^s = \{(\vx,2n) \mid \vx \in \Z^{s-1}, n\in\Z  \}.\]
Let $N'$ be the smallest natural number such that
\begin{equation}\label{eq:bound}
\max(\Lambda_1) \leq 2^{N'+1}-1.
\end{equation}
Define $\Lambda_{s,N'}\equiv [0,2^{N'})^s \cap \Z^s $ as in \eqref{eq:Lambda_dn}.
By Proposition \ref{prop:lambda_dE} the set
\begin{equation*}
\Lambda_{s,N'}^E \equiv \{\vn=(\vx, 2j) \in \Z^s \mid \sigma_{_{s,N'}}(\vn) \geq 0; \vn\in(-2^{N'},2^{N'})^s\cap\Z^s\}
\end{equation*}
is an index set for the system of equations $\mathcal{E}_{(\Lambda_{s,N'},B,s)}$.
Define
\begin{equation*}
    \Lambda_{1,N'} \equiv \theta_{s,N'}(\Lambda_{s,N'}).
\end{equation*}
By Proposition
\ref{prop:iso_sq}
\begin{equation*}
    \mathcal{E}_{(\Lambda_{s, N'},B,s)}\sim\mathcal{E}_{(\Lambda_{1, N'},[2],1)}
\end{equation*}
with well-defined bijections $\theta_{s,N'}$ and $\eta_{s,N'}$.

By Lemma \ref{lm:limits} (D) and \eqref{eq:bound}, the set $\Lambda_1$ is a subset of $\Lambda_{1,N'}$.
Define
\begin{equation*}
    \Lambda_s \equiv \theta_{s,N'} ^{-1} (\Lambda_1).
\end{equation*}
It is clear that $\Lambda_s$ is a subset of $\Lambda_{s,N'}.$ By Proposition \ref{prop:subsetlambda}
\begin{equation*}
    \mathcal{E}_{(\Lambda_s,B,s)}\sim\mathcal{E}_{(\Lambda_1,[2],1)}
\end{equation*}
with bijections which are $\theta_{s,N'}$ and $\eta_{s,N'}$ restricted to $\Lambda_s$ and $\Lambda_1$ respectively.

By Lemma \ref{lm:phiiso}, the set $\mathcal{S}_s \equiv \{b_\vm = a_{\theta_{s,N'}^{-1}(\vm)} \mid \vm\in\Lambda_s\}$ is a solution to $\mathcal{E}_{(\Lambda_s,B,s)}$,
and the scaling function $\varphi_s$ derived from $(\mathcal{S}_s,\mathcal{E}_{(\Lambda_s,B,s)})$ and $\varphi_1$ are algebraically isomorphic.
It is clear that $\varphi_s \in \mathcal{W}_0 (B,s)$.
Theorem \ref{thm:main_dual} is proved.
\end{proof}

\textsc{Proof of Theorem \ref{thm:newmain}}
Let $A$ be a $d\times d$ expansive dyadic integral matrix with $d\geq 1$ and $\varphi_A$ be a scaling function in $\mathcal{W}_0(d)$
associated with $A$.
By Theorem \ref{thm:main}, there exists a scaling function $\varphi_1$ in $\mathcal{W}_0(1)$
such that $\varphi_A \simeq \varphi_1$.

Let $B$ be an $s\times s$ expansive dyadic integral matrix with $s\geq 1$.
By Theorem \ref{thm:main_dual} there is a scaling function $\varphi_B\in \mathcal{W}_0(B,s)$ associated with $B$ such that $\varphi_1 \simeq \varphi_B$.
Since the relation $\simeq$ in $\mathcal{W}_0$ is an equivalence relation and hence transitive, so
\begin{equation*}
    \varphi_B \simeq \varphi_A.
\end{equation*}
Theorem \ref{thm:newmain} is proved.

\section{Discussion}\label{ss:notiso}

In this section, we  provide an example of (orthogonal) scaling function $\varphi_c$ in $L^2(\R^2)$ with the property that the support of its associated solution, $\Lambda_c$ is infinite and also that $\varphi_c$ is not algebraically isomorphic to any scaling function in $L^2 (\R)$. So the finiteness assumption (on scaling functions in $\mathcal{W}_0$) in Theorem \ref{thm:newmain} can not be removed.

\begin{proposition}\label{prop:noiso}
Let $A$ be a $2\times 2$ dyadic integral expansive matrix and
let $\varphi_0$ be a scaling function derived from a solution $\{a_\vn \}$ to
$\mathcal{E}_{(\Lambda_2,A,2)}$. Assume further that the support
$\Lambda_2$ contains the set $L$, a sublattice of $\Z^2$,
\begin{equation*}
    L\equiv \{\vn = (m,n) \in \Z^2 \mid m+n\in 2\Z\}.
\end{equation*}
Then there is no scaling function in $L^2(\R)$  algebraically isomorphic to $\varphi_0$.
\end{proposition}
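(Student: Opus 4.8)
The plan is to argue by contradiction, extracting from the infinite sublattice $L\subseteq\Lambda_2$ an infinite family of pairwise-distinct non-trivial equations in $\mathcal{E}_{(\Lambda_2,A,2)}$, each of a very specific shape, and then show that no reduced system $\mathcal{E}_{(\Lambda_1,[2],1)}$ over a \emph{finite} support $\Lambda_1\subset\Z$ can match all of them under a bijection $\theta$. The starting observation is that, in the basis of Proposition \ref{prop:newbase}, we may take $A\Z^2=\{(m,n)\mid n\in2\Z\}$; since $L=\{(m,n)\mid m+n\in2\Z\}$, every $\vk=(2a,2b)\in A\Z^2$ with $\vk\in L$ satisfies $L\cap(L-\vk)\neq\emptyset$ (indeed $L$ is a group containing $\vk$, so $L-\vk=L$). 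By Lemma \ref{lm:vklambda}, each such $\vk$ generates a non-trivial equation in $\mathcal{E}_{(\Lambda_2,A,2)}$, and by Lemma \ref{lm:pmk} distinct $\pm$-classes give distinct equations. Thus $\mathcal{E}_{(\Lambda_2,A,2)}$ has infinitely many non-trivial equations, hence any index set $\Lambda_2^E$ is infinite.

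The first key step is therefore: if $\varphi_0\simeq\varphi_1$ for some $\varphi_1$ derived from $(\mathcal{S}_1,\mathcal{E}_{(\Lambda_1,[2],1)})$, then by Definitions \ref{def:EQNISO} and \ref{def:PHIISO} there is a bijection $\theta:\Lambda_2\to\Lambda_1$ and a bijection $\eta:\Lambda_2^E\to\Lambda_1^E$. Since $\theta$ is a bijection and $\Lambda_2\supseteq L$ is infinite, $\Lambda_1$ is infinite — but this already contradicts the hypothesis that $\varphi_1$ is a scaling function whose support $\Lambda_1$ must be finite if it is to lie in $\mathcal{W}_0(1)$. Wait — the statement as written only asks for \emph{no scaling function in $L^2(\R)$}, not necessarily one in $\mathcal{W}_0(1)$, so the cardinality argument alone is not enough; one must rule out isomorphism to scaling functions derived from \emph{infinite} solutions in one dimension as well. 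So the real content is: even allowing $\Lambda_1$ infinite, the combinatorial structure of $\mathcal{E}_{(\Lambda_1,[2],1)}$ cannot reproduce that of $\mathcal{E}_{(\Lambda_2,A,2)}$.

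The second, and main, step is the obstruction itself. I would count, for each $\vn\in\Lambda_2$, the number of non-trivial equations in which the \emph{diagonal} term $h_{\vn}\overline{h_{\vn}}$ appears — equivalently, the number of $\vk\in A\Z^2$ with $\vn,\vn+\vk\in\Lambda_2$; call this the degree $\deg(\vn)$. Because $\Lambda_2\supseteq L$ and $L$ is a full-rank sublattice, for a generic $\vn\in L$ in the interior of $\Lambda_2$ the set $\{\vk\in A\Z^2\cap L\mid \vn+\vk\in\Lambda_2\}$ grows without bound as $\Lambda_2$ extends; more precisely, one finds vertices of arbitrarily large degree, and moreover the \emph{two-dimensional geometry} of $A\Z^2\cap L\cong\Z^2$ forces incidence patterns (e.g. many pairs $\vk_1,\vk_2$ with $\vk_1-\vk_2$ also an admissible offset) that cannot occur in $\Z$. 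In dimension one, $\Lambda_1^E\subseteq2\Z$ is linearly ordered, and for any $n\in\Lambda_1$ the admissible offsets $\{k\in2\Z\mid n,n+k\in\Lambda_1\}$ inherit the one-dimensional additive structure; the isomorphism $\eta$ would have to carry the $\Z^2$-incidence geometry of the offsets around a high-degree vertex of $\Lambda_2$ to a compatible $\Z$-incidence geometry, which is impossible. Concretely I would pick three offsets $\vk_1,\vk_2,\vk_3\in A\Z^2\cap L$ with $\vk_1+\vk_2=\vk_3$ and a vertex $\vn_0\in L$ such that $\vn_0,\vn_0+\vk_1,\vn_0+\vk_2,\vn_0+\vk_3$ all lie in $\Lambda_2$ (possible since $L$ is a group and $\Lambda_2$ large), observe that the corresponding equations force, via $\theta$ and the additivity $\theta(\vn+\vk)$ demanded along matching terms, an additive relation $\eta(\vk_1)+\eta(\vk_2)=\eta(\vk_3)$ or a sign variant, and then show that with enough such triples — exploiting that $L\cap A\Z^2\cong\Z^2$ has two independent directions — the required simultaneous additive/sign relations on $\eta$ cannot be satisfied by any injection into $\Z$ (the rank-$2$ relation lattice cannot embed).

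The main obstacle I anticipate is making the "incidence geometry cannot embed" step fully rigorous and matching it precisely to Definition \ref{def:EQNISO}: the isomorphism there matches equations one at a time by substituting $h_\vn\mapsto h'_{\theta(\vn)}$, so I must carefully track how a single $\Lambda_2$-equation, which is a sum over $\vn\in\Lambda_2\cap(\Lambda_2-\vk)$, maps \emph{as a set of monomials} into a single $\Lambda_1$-equation, and deduce the local additivity $\theta(\vn+\vk)=\theta(\vn)+\eta(\vk)$ only on the (possibly small) overlap sets — so the additive relations on $\eta$ come with domain restrictions that must be managed. The cleanest route is probably to isolate a finite sub-configuration inside $L\cap\Lambda_2$ — a small parallelogram of lattice points closed under the relevant sums — on which all the needed overlaps are nonempty, derive the finitely many forced relations $\eta(\vk_i+\vk_j)=\pm\eta(\vk_i)\pm\eta(\vk_j)$ there, and show already this finite system over $\Z$ is inconsistent with $\vk\mapsto\eta(\vk)$ being injective; then the infinitude of $L$ guarantees such a configuration exists inside $\Lambda_2$ regardless of its exact shape.
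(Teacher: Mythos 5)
Your overall strategy is the same as the paper's: argue by contradiction, use the matching of monomials between an equation of $\mathcal{E}_{(\Lambda_2,A,2)}$ generated by $\vk$ and the corresponding equation of $\mathcal{E}_{(\Lambda_1,[2],1)}$ generated by $\eta(\vk)$ to force the local additivity $\theta(\vn+\vk)=\theta(\vn)+\eta(\vk)$, and then exploit the rank-two structure of $A(2\Z^2)\subseteq L$ to contradict injectivity of $\theta$ into $\Z$. The domain bookkeeping you worry about is actually painless here, because $L$ is a group and $A(2\Z^2)\subseteq L$, so $L\cap(L-\vk)=L$ for every relevant $\vk$ and the additivity holds for all $\vn\in L$.

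The genuine gap is in your closing step. You propose to ``isolate a finite sub-configuration \dots a small parallelogram of lattice points closed under the relevant sums'' and to show that ``already this finite system over $\Z$ is inconsistent with $\vk\mapsto\eta(\vk)$ being injective.'' No bounded configuration can deliver that contradiction: any finite family of relations $\eta(\vk_i+\vk_j)=\eta(\vk_i)+\eta(\vk_j)$ on a bounded piece of the lattice is satisfied by injective maps (for instance $(2a,2b)\mapsto a+Mb$ with $M$ large is additive and injective on any prescribed bounded region). The obstruction ``$\Z^2$ does not inject additively into $\Z$'' only bites once additivity is known on an unbounded set of multiples, because the point where the collision occurs depends on the unknown values of $\theta$. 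This is exactly where the hypothesis that $\Lambda_2$ contains the \emph{full} sublattice $L$ is indispensable: the paper normalizes $\theta(\vec{0})=0$ so that $\eta(\vk)=\theta(\vk)$, iterates the additivity along $L$ to obtain $\theta(a\vk)=a\,\theta(\vk)$ for \emph{all} $a\in\Z$ (every such multiple lies in $L\subseteq\Lambda_2$), and then, with $\vk_1=\pm A(2\ve_1)$, $\vk_2=\pm A(2\ve_2)$ in $\Lambda_2^E$ and $c=\theta(\vk_1)\neq 0$, $d=\theta(\vk_2)\neq 0$, exhibits the collision $\theta(c\vk_2)=cd=\theta(d\vk_1)$ with $c\vk_2\neq d\vk_1$. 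You need to replace the finite-parallelogram step by this unbounded propagation; as written, your argument does not close.
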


\begin{proof}

We prove by contradiction.
Let $\varphi_1$ be a scaling function in $L^2(\R)$ derived from its solution $\{b_n \mid n\in \Lambda_1\}$ to its reduced system of equations $\mathcal{E}_{(\Lambda_1,[2],1)}$ with the index set $\Lambda_1^E$.
Assume that $\varphi_0 \simeq \varphi_1 $.
By definition we have
$\mathcal{E}_{(\Lambda_2,A,2)} \sim \mathcal{E}_{(\Lambda_1,[2],1)}$ with bijections $\theta$ and $\eta$, $\Lambda_1=\theta(\Lambda_2)$ and $\eta(\Lambda_2^E) = \Lambda_1^E$.
Without loss of generality, by Proposition \ref{prop:shift} we can further assume that
$\vec{0}$ is in $\Lambda_2$ and $0$ is in $\Lambda_1$, and that $\theta$ maps $\vec{0}$ to $0$.

Note that since $L=\{\vn = (m,n) \in \Z^2 \mid m+n\in 2\Z\}\subset \Lambda_2$, by Lemma \ref{lm:vklambda}
each element $\vl$ in $E\equiv A(2\Z^2)$ will generate an equation
in the reduced system of equations of $\mathcal{E}_{(\Lambda_2,A,2)}$ hence by Lemma \ref{lm:pmk}
one of $\pm \vl$ must be in $\Lambda_2^E$. Since $E\subset L$ and $L$ is a sublattice of $\Z^2$, we have
\begin{equation*}
\vn+ n \vk \in L \subseteq \Lambda_2, \forall n \in\Z, \forall\vk \in E\cap \Lambda_2^E \textrm{ and  }\forall \vn \in L.
\end{equation*}

Let $\vk \in E\cap \Lambda_2^E$. Since $\mathcal{E}_{(\Lambda_2,A,2)} \sim \mathcal{E}_{(\Lambda_1,[2],1)}$, the equation
\begin{equation}\label{eq:k2}
\sum_{\vn\in\Lambda_2} h_{\vn}\overline{h_{\vn+\vk}} = \delta_{\vec{0}\vk}
\end{equation}
in $\mathcal{E}_{(\Lambda_2,A,2)}$ generated by $\vk$, corresponds to
\begin{equation}\label{eq:k1a}
\sum_{m\in \Lambda_1=\theta(\Lambda_2)} h'_{m}\overline{h'_{m+\eta(\vk)}} = \delta_{0\eta(\vk)}
\end{equation}
in $\mathcal{E}_{(\Lambda_1,[2],1)}$ generated by $\eta(\vk) \in \Lambda_1^E$.
On the other hand, by Definition \ref{def:EQNISO}, when we replace $\vn,\vn+\vk$ by $\theta(\vn),\theta(\vn+\vk)$ and $\delta_{0\vk}$ by $\delta_{0\eta(\vk)}$ in Equation \eqref{eq:k2}, we obtain
\begin{equation}\label{eq:k1b}
\sum_{\vn\in\Lambda_2} h'_{\theta(\vn)}\overline{h'_{\theta(\vn+\vk)}} = \delta_{0\eta(\vk)}
\end{equation}
which is the same as Equation \eqref{eq:k1a}.
So $\theta(\vn+\vk)- \theta(\vn)$ is a constant $\eta(\vk)$ independent of $\vn$, in particular
\begin{equation}\label{eq:add}
\eta(\vk) = \theta(\vn+\vk)- \theta(\vn), \forall \vn\in L \subseteq \Lambda_2.
\end{equation}
Let $\vn = \vec{0}$ and notice that $\theta(\vec{0})=0$, we have
\begin{align*}
\eta(\vk) = \theta(\vk), ~ \forall \vk\in  E\cap\Lambda_2^E.
\end{align*}
Hence \eqref{eq:add} becomes
\begin{equation}\label{eq:add2}
\theta(\vk) = \theta(\vn+\vk)- \theta(\vn), ~ \forall \vn\in L\textrm{ and }\forall \vk\in  E\cap\Lambda_2^E.
\end{equation}
Let $\vn = \vk$ in Equation \eqref{eq:add2}, we have
\begin{align*}
\theta(\vk) & = \theta(\vk+\vk)- \theta(\vk)\\
 \theta(2\vk) & =2 ~\theta(\vk).
\end{align*}
And let $\vn = -\vk$ in Equation \eqref{eq:add2}, we have
\begin{align*}
 \theta(-\vk) & =-\theta(\vk)
\end{align*}
By induction, we have
\begin{align*}
 \theta(a\vk) =a ~\theta(\vk), \forall \vk\in E \cap \Lambda_2^E, \forall a\in \Z.
\end{align*}

Let $\{\ve_1,\ve_2\}$ be the standard basis of $\Z^2$.
Let $\vk_1$ be one of $\pm A(2\ve_1)$ that is in $E\cap\Lambda_2^E$ and $\vk_2$ be one of $\pm A(2\ve_2)$ that is in $E\cap\Lambda_2^E$.
Denote $c= \theta(\vk_1),d= \theta(\vk_2)$.
Since $\theta$ is injection, $c$ and $d$ are not zero. We have
\[\theta(c \vk_2) = c \theta(\vk_2) =cd = d\theta(\vk_1)= \theta(d\vk_1). \]
So $\theta$ maps two distinct elements $c \vk_2$ and $d\vk_1$ in $\Lambda_2$ to the same element $cd\in \Z$. A contradiction. Proposition \ref{prop:noiso} is proved.
\end{proof}

Let $A=\left[\begin{array}{cc} 1 & 1 \\ -1 & 1 \end{array}\right]$. Define $\varphi_c$ by
\begin{equation*}
  \hat{\varphi}_c \equiv \frac{1}{2\pi}\chi_{[-\pi,\pi]^2},
\end{equation*}
where the Fourier Transform of $\varphi_c$ is defined as $\displaystyle \hat{\varphi}_c(\vs) \equiv \frac{1}{2\pi} \int_{\R^2} \varphi_c(\vt) e^{-i \vt \cdot \vs} d\mu$.
The function $\varphi_c$ is an orthogonal scaling function associated with matrix $A$ \cite{li} which satisfies the two-scale relation
\begin{align*}
\varphi_c & = \sum_{\vn\in\Z^2} s_{\vn} D_A T_{\vl} ~\varphi_c.
\end{align*}
The set $\{s_\vn\}$ is a solution to the system of equations \eqref{eq:lawton}. Let $\Lambda_c$ denote the support of $\{s_\vn\}$,
$\mathcal{E}_{(\Lambda_c,A,2)}$ denote the reduced system of equations and $\Lambda_c^E$ its index set.
We claim
\begin{description}
  \item[\textsc{Claim}] $L+\vec{e}_1 =  \{\vn =(n,m) \in\Z^2 \mid m+n\in 2\Z-1\} \subseteq \Lambda_c.$
\end{description}

\textit{Proof of Claim}
Assume that $\vl = (m,n) \in \Z^2$ and $m+n$ be odd.
Denote the transpose of $A$ as $A^{\tau}$.
Since $\varphi_c$ is an orthogonal scaling function, we have
\begin{align*}
s_{_{\vl}} & = \langle \varphi_c, D_A T_{\vl} ~\varphi_c   \rangle
     = \langle \hat{\varphi}_c, \hat{D}_A \hat{T}_{\vl} ~\hat{\varphi}_c \rangle
     = \langle \hat{D}_A^{-1}\hat{\varphi}_c, \hat{T}_{\vl} ~\hat{\varphi}_c \rangle
     = \langle D_{A^\tau}\hat{\varphi}_c, e^{-i \vt \cdot \vl} \hat{\varphi}_c \rangle \\
    & = \int_{\R^2} D_{A^\tau} \big( \frac{1}{2\pi} \chi_{[-\pi,\pi]^2}\big) \cdot e^{-i \vt \cdot \vl} \hat{\varphi}_c ~d\mu\\
    & = \frac{1}{\pi\sqrt{2}}\int_{(A^\tau)^{-1}[-\pi,\pi]^2} \cos(mt_1)\cos(nt_2) ~dt_1 dt_2\\
    & =  \left\{\begin{array}{ll}
                \frac{2\sqrt{2}}{\pi} \frac{(-1)^{n+1}}{n^2-m^2}    & \text{ $m,n$ non-zero and $m+n$ odd}\\
                -\frac{4\sqrt{2}}{n^2\pi}     & \text{ $m=0$ and $n$ odd}\\
                -\frac{4\sqrt{2}}{m^2\pi}    & \text{ $n=0$ and $m$ odd}
                \end{array}\right.\\
    & \neq 0.
\end{align*}
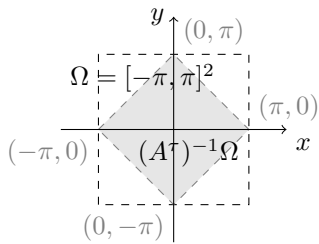
\begin{figure}[h]
\begin{tikzpicture}[scale=1]
\draw[step=.25cm,dashed] (-1,-1) -- (-1,1) -- (1,1) -- (1,-1) -- (-1,-1);
\draw[step=.25cm,gray,dashed,fill=gray9]  (-1,0) node[below left]  {$(-\pi,0)$} -- (0,1)node[above right]  {$(0,\pi)$} --
                                                   (1,0) node[above right]  {$(\pi,0)$} -- (0,-1)node[below left]  {$(0,-\pi)$} -- (-1,0);
\draw[->] (-1.5,0)-- (1.5,0)node[below right] {$x$};
\draw[->] (0,-1.5)-- (0,1.5)node[left] {$y$};
\draw[-] (-1.5,1) node[below right] {$\Omega=[-\pi,\pi]^2$};
\draw[-] (-0.6,0) node[below right] {$(A^{\tau})^{-1}\Omega$};
\end{tikzpicture}
\caption{Support of $(A^\tau)^{-1}\hat{\varphi}_c$}\label{fg_varphi_c}
\end{figure}
The claim is proved.

So $s_{_{\vl}} \neq 0$ for $\vl \in \{\vn =(n,m) \in\Z \mid m+n\in 2\Z-1\} = L+\vec{e}_1$.
Denote the scaling function of $\varphi_c$ after shifting $\ve_1$ as $\varphi_1$.
By Proposition \ref{prop:shift}, $\varphi_1 \simeq \varphi_c$.
$\varphi_1$ is derived from a solution to $\mathcal{E}_{(\Lambda_1,A,2)}$, with support $\Lambda_1 \equiv \Lambda_c-\vec{e}_1$.
Thus $\Lambda_1$ contains the infinite set $L$.
By Proposition \ref{prop:noiso}, the scaling function $\varphi_1$ can not be algebraically isomorphic to a scaling function in $L^2(\R)$.
Hence $\varphi_c$ can not be algebraically isomorphic to a scaling function in $L^2(\R)$.

\section{Acknowledgements}
The authors thank Qing Gu for comments that greatly improved the manuscript.

\end{document}